\newtheorem{proposition}{Proposition}[section]
\newtheorem{lemma}[proposition]{Lemma}
\newtheorem{corollary}[proposition]{Corollary}
\newtheorem{theorem}[proposition]{Theorem}
\theoremstyle{definition}
\newtheorem{definition}[proposition]{Definition}
\newtheorem{example}[proposition]{Example}
\newtheorem{examples}[proposition]{Examples}
\newtheorem{remark}[proposition]{Remark}
\newcommand{\thlabel}[1]{\label{th:#1}}
\newcommand{\thref}[1]{Theorem~\ref{th:#1}}
\newcommand{\selabel}[1]{\label{se:#1}}
\newcommand{\seref}[1]{Section~\ref{se:#1}}
\newcommand{\lelabel}[1]{\label{le:#1}}
\newcommand{\leref}[1]{Lemma~\ref{le:#1}}
\newcommand{\prlabel}[1]{\label{pr:#1}}
\newcommand{\prref}[1]{Proposition~\ref{pr:#1}}
\newcommand{\colabel}[1]{\label{co:#1}}
\newcommand{\coref}[1]{Corollary~\ref{co:#1}}
\newcommand{\relabel}[1]{\label{re:#1}}
\newcommand{\exlabel}[1]{\label{ex:#1}}
\newcommand{\exref}[1]{Example~\ref{ex:#1}}
\newcommand{\delabel}[1]{\label{de:#1}}
\newcommand{\deref}[1]{Definition~\ref{de:#1}}
\newcommand{\eqlabel}[1]{\label{eq:#1}}
\newcommand{\equref}[1]{(\ref{eq:#1})}
\def\ot{\otimes}
\def\CC{{\mathbb C}}
\newcommand{\Cc}{\mathcal{C}}
\def\*C{{}^*\hspace*{-1pt}{\Cc}}
\def\text#1{{\rm {\rm #1}}}
\begin{document}
\title[On a type of commutative algebras]
{On a type of commutative algebras}

\author{A. L. Agore}
\address{Faculty of Engineering, Vrije Universiteit Brussel, Pleinlaan 2, B-1050 Brussels, Belgium \textbf{and} Department of Applied Mathematics, Bucharest
University of Economic Studies, Piata Romana 6, RO-010374
Bucharest 1, Romania} \email{ana.agore@vub.ac.be and
ana.agore@gmail.com}

\author{G. Militaru}
\address{Faculty of Mathematics and Computer Science, University of Bucharest, Str.
Academiei 14, RO-010014 Bucharest 1, Romania}
\email{gigel.militaru@fmi.unibuc.ro and gigel.militaru@gmail.com}

\thanks{A.L. Agore is Postoctoral Fellow of the Fund for Scientific
Research-Flanders (Belgium) (F.W.O. Vlaanderen). This work was
supported by a grant of the Romanian National Authority for
Scientific Research, CNCS-UEFISCDI, grant no. 88/05.10.2011.}

\subjclass[2010]{17C10, 17C55} \keywords{Jacobi-Jordan algebras,
crossed products, the extension problem, non-abelian cohomology}

\maketitle

\begin{abstract} We introduce some basic concepts
for Jacobi-Jordan algebras such as: representations, crossed
products or Frobenius/metabelian/co-flag objects. A new family of
solutions for the quantum Yang-Baxter equation is constructed
arising from any $3$-step nilpotent Jacobi-Jordan algebra. Crossed
products are used to construct the classifying object for the
extension problem in its global form. For a given Jacobi-Jordan
algebra $A$ and a given vector space $V$ of dimension
$\mathfrak{c}$, a global non-abelian cohomological object
${\mathbb G} {\mathbb H}^{2} \, (A, \, V)$ is constructed: it
classifies, from the view point of the extension problem, all
Jacobi-Jordan algebras that have a surjective algebra map on $A$
with kernel of dimension $\mathfrak{c}$. The object ${\mathbb G}
{\mathbb H}^{2} \, (A, \, k)$ responsible for the classification
of co-flag algebras is computed, all $1 + {\rm dim} (A)$
dimensional Jacobi-Jordan algebras that have an algebra surjective
map on $A$ are classified and the automorphism groups of these
algebras is determined. Several examples involving special sets of
matrices and symmetric bilinear forms as well as equivalence
relations between them (generalizing the isometry relation) are
provided.
\end{abstract}

\section*{Introduction}
Jacobi-Jordan algebras (JJ algebras for short) were recently
introduced in \cite{BF} as vector spaces $A$ over a field $k$,
equipped with a bilinear map $\cdot : A \times A \to A$ satisfying
the Jacobi identity and instead of the skew-symmetry condition
valid for Lie algebras we impose commutativity $x \cdot y = y
\cdot x$, for all $x$, $y\in A$. These type of algebras appeared
already in relation with Bernstein algebras in 1987 (\cite{Buse}).
One crucial remark is that JJ algebras are examples of the more
popular and well-referenced Jordan algebras \cite{jordan2}
introduced in order to achieve an axiomatization for the algebra
of observables in quantum mechanics. In \cite{BF} the authors
achieved the classification of these algebras up to dimension $6$
over an algebraically closed field of characteristic different
from $2$ and $3$. Our purpose is to introduce and develop some
basic concepts for JJ algebras which might eventually lead to an
interesting theory. As it was explained in  \cite{BF} and as it
will be obvious from this paper as well, JJ algebras are objects
fundamentally different from both associative and Lie algebras
even if their definition differs from the latter only modulo a
sign. We aim to prove that there exists a rich and very
interesting theory behind the Jacobi-Jordan algebras which
deserves to be developed further mainly for three reasons. The
first reason is a theoretical one: JJ algebras are objects of
study in their own right as objects living at the interplay
between the intensively studied Lie algebras and respectively
associative algebras. In this context it is a challenge to
introduce the JJ algebra counterparts of concepts already defined
in the theory of Lie (resp. associative) algebras and to see which
of the results valid in the fields mentioned above are also true
for JJ algebras. The second reason comes from the observation that
JJ algebras are a special class of Jordan algebras which turned
out to play a fundamental role not only in quantum mechanics but
also in differential geometry, algebraic geometry or functional
analysis \cite{Mc} -- from this perspective they deserve a
detailed study. Finally, the third reason is given by the
connection which we will highlight in \seref{basic} between JJ
algebras and the celebrated quantum Yang-Baxter equation from
theoretical physics. A central open problem in this context is to
construct new families of solutions: here we take a first step
towards it by associating to any $3$-step nilpotent JJ algebra a
new family of solutions for the quantum Yang-Baxter equation.
Therefore, constructing such algebras becomes a matter of
interest.

The paper is organized as follows. The first section fixes
notations and conventions used throughout and introduces some
basic concepts in the context of JJ algebras such as modules,
representations or Frobenius objects. In order to find the right
axiom for defining modules over a JJ algebra (\deref{moduleJJ}) we
use the classical trick used for objects $A$ in a $k$-linear
category ${\mathcal C}$: that is, a vector space $V$ with a
bilinear map $\triangleright : A \times V \to V$ such that
$A\times V$ with the 'semi-direct' product type multiplication has
to be an object \emph{inside} the category ${\mathcal C}$.
Representations of a JJ-algebra $A$ (\deref{refJJ}) are introduced
exactly as the representations of a Jordan algebra, if we see
JJ-algebras as a special case of them -- this definition fits with
the way modules were defined, i.e. there exists an isomorphism of
categories between modules and representations over a JJ algebra.
Having introduced these concepts, Frobenius JJ algebras
(\deref{deffrobjj}) arise naturally as symmetric objects in the
category: that is, finite dimensional JJ algebras $A$ such that $A
\cong A^*$, in the category of modules over $A$. Thus, the
definition is similar to the one adopted in the case of the
intensively studied associative Frobenius algebras \cite{frob,
kadison, Kock} and Lie Frobenius algebras (also known in the
literature as self-dual, metric or Lie algebras with an invariant
non-degenerate bilinear form \cite{kat, medina, fig, pelc}). In
\seref{crossed} we deal with the main question addressed in this
paper, namely the \emph{global extension (GE) problem}, and the
crossed product for JJ algebras is introduced as the object
responsible for it. Introduced for Leibniz algebras in
\cite{Mi2013} and studied for associative/Poisson algebras in
\cite{am-2015b, am-2015}, the GE problem is a generalization of
the classical H\"{o}lder's extension problem, and for JJ algebras
consists of the following question:

\emph{Let $A$ be a JJ algebra, $E$ a vector space and $\pi : E \to
A$ a linear epimorphism of vector spaces. Describe and classify
the set of all JJ algebra structures that can be defined on $E$
such that $\pi : E \to A$ becomes a morphism of JJ algebras.}

\prref{hocechiv} proves that any such JJ algebra structure
$\cdot_E$ on $E$ is isomorphic to a \emph{crossed product} $A \# V
= A \#_{(\triangleright, \, \vartheta, \, \cdot_V)} V$, which is a
JJ algebra associated to $A$ and $V := {\rm Ker} (\pi)$ connected
by an action $\triangleright : A \times V \to V$, a symmetric
cocycle $\vartheta: A \times A \to V$ and a JJ algebra structure
$\cdot_V$ on $V$ satisfying some natural axioms as stated in
\prref{hocprod}. The main result of the section, that gives the
theoretical answer to the GE-problem, is proven in
\thref{main1222}: the classifying object for the GE problem (i.e.
the set parameterizes all JJ algebra structures on $E$ up to an
isomorphism which stabilizes $V$ and co-stabilizes $A$) is
parameterized by an explicitly constructed global non-abelian
cohomological type object denoted by ${\mathbb G} {\mathbb H}^{2}
\, (A, \, V)$. \coref{desccompcon} proves that ${\mathbb G}
{\mathbb H}^{2} \, (A, \, V)$ is the coproduct of all non-abelian
cohomologies ${\mathbb H}^{2} \, \, (A, \, (V, \cdot_V))$, the
latter being the Jacobi-Jordan classifying object for the
classical extension problem. In the 'abelian' case (corresponding
to the trivial multiplication $\cdot_V : = 0$ on $V$)
representations of $A$ are used in order to give the decomposition
of ${\mathbb H}^{2} \, \, (A, \, (V, \cdot_V := 0))$ as a
coproduct over all $A$-module structures on $V$
(\coref{cazuabspargere}): this is the JJ algebra counterpart of
the Hochschild theorem for associative algebras \cite[Theorem
6.2]{Hoch2} respectively of Chevalley and Eilenberg's results for
Lie algebras \cite[Theorem 26.2]{CE}. Finally, in
\seref{aplicatii} our theoretical results from the previous
section are applied to two important classes of JJ algebras namely
co-flag algebras (\deref{coflaglbz}) and metabelian algebras. One
of our motivations for paying special attention to metabelian
algebras is that the concept turned out to play a central role in
many other fields; for instance, metabelian groups play a key role
in the study of periodic groups or the Brauer groups as well as in
the isomorphism problem. Furthermore, all JJ algebras of small
dimension classified in \cite{BF} are metabelian. As a concrete
example it is shown in \exref{heiscof} that ${\mathbb G} {\mathbb
H}^{2} \, (\mathfrak{h} (2n + 1, k), \, k) \, \cong \, {\rm Sym}
(n, k) \times {\rm Sym} (n, k) \times {\rm Sym} (n, k)/kI_n$,
where $\mathfrak{h} (2n + 1, k)$ is the Heisenberg algebra and
${\rm Sym} (n, k)$ is the space of all $n\times n$ symmetric
matrices. \prref{dim1der} and respectively \prref{codim1der}
describe and classify all metabelian JJ algebras having the
derived algebra of dimension $1$ respectiv of co-dimension $1$.
Moreover, the explicit description of all $(n+1)$-dimensional
metabelian JJ algebras having the derived algebra of dimension $1$
or codimension $1$ is provided.

\section{Jacobi-Jordan algebras: basic concepts} \selabel{basic}
\subsection*{Notations and terminology}
For a family of sets $(X_i)_{i\in I}$ we shall denote by
$\sqcup_{i\in I} \, X_i$ their coproduct in the category of sets,
i.e. $\sqcup_{i\in I} \, X_i$ is the disjoint union of the
$X_i$'s. Unless otherwise specified, all algebraic entities
(vector spaces, linear or bilinear maps etc.) are over an
arbitrary field $k$. A linear map $f: W \to V$ between two vector
spaces is called \emph{trivial} if $f (x) = 0$, for all $x\in W$.
A bilinear map $\vartheta : W\times W \to V$ is symmetric if
$\vartheta (x, \, y) = \vartheta (y, \, x)$, for all $x$, $y \in
W$. Throughout this paper, by an \emph{algebra} we mean a pair $A
= (A, \cdot)$ consisting of a vector space and a bilinear map
$\cdot : A\times A \to A$ called the multiplication of $A$. If in
addition the multiplication $\cdot$ on $A$ satisfies some
extra-axioms like commutativity, associativity, skew-symmetry and
the Jacobi identity, Leibniz law etc. then $A$ will be called
commutative, associative, Lie or respectively Leibniz algebra,
etc. An algebra $A$ is called \emph{abelian} if its multiplication
is the trivial map, i.e. $a \cdot b = 0$, for all $a$, $b\in A$.
We shall denote by $\sum_{(c)}$ the circular sum - for example, if
$A$ is an algebra and $\vartheta: A \times A \to V$ is a bilinear
map then $\sum_{(c)} \, \vartheta (a, \, b \cdot c) = \vartheta
(a, \, b \cdot c) + \vartheta (b, \, c\cdot a) + \vartheta (c, \,
a\cdot b)$. The concepts of morphisms of algebras, subalgebras,
two-sided ideals, etc. are defined in the obvious way. If $X$ and
$Y$ are two subspace of an algebra $A$ then $X \cdot Y$ stands for
the subspace generated by all $x \cdot y$, for all $x \in X$ and
$y\in Y$. In particular, $A' := A \cdot A$ is a two-sided ideal of
$A$ called the \emph{derived algebra} of $A$. An algebra $A$ is
called \emph{metabelian} if $A'$ is an abelian subalgebra of $A$,
i.e. $(a\cdot b) \cdot (c \cdot d) = 0$, for all $a$, $b$, $c$, $d
\in A$. The derived series of an algebra $A$ is defined
inductively by $A^{(1)} := A'$ and $A^{(n+1)} := (A^{(n)})'$, for
all $n \geq 1$. An algebra $A$ is called \emph{solvable} of step
$m \geq 1$ if $A^{(m)} = 0$ and $A^{(i)} \neq 0$, for all $i < m$.
Thus, a non-abelian algebra $A$ is metabelian if it is a $2$-step
solvable algebra. The lower central series is the series with
terms given by: $A^1 := A$ and $A^{n+1} := \sum_{i=1}^{n} A^i
\cdot A^{n+1-i}$, for all $n\geq 1$. An algebra $A$ is called
\emph{nilpotent} of step $m>1$ if $A^{m} = 0$ and $A^{i} \neq 0$,
for all $i < m$. A \emph{Leibniz algebra} \cite{LoP} is an algebra
$A = (A, \cdot)$ such that the multiplication $\cdot$ satisfies
the Leibniz law for any $a$, $b$, $c\in A$:
\begin{equation} \eqlabel{lbz}
(a \cdot b) \cdot c = a \cdot (b \cdot c) + (a \cdot c) \cdot b
\end{equation}
Leibniz algebras have become popular and intensively studied in
their own right as non-commutative generalizations of Lie
algebras, but also in connection to homological algebra, classical
or non-commutative differential geometry, vertex operator algebras
or integrable systems. A \emph{Jordan algebra} \cite{Al, jordan2}
is an algebra $A = (A, \circ)$ such that the multiplication
$\circ$ satisfies the following axioms for any $a$, $b\in A$:
\begin{equation}\eqlabel{jorddef}
a \circ b = b \circ a, \qquad (a^2 \circ b) \circ a = a^2 \circ (b
\circ a)
\end{equation}
Any associative algebra $A$ can be endowed with the Jordan algebra
structure given by $a \circ b := ab + ba$, for all $a$, $b\in A$,
where the juxtaposition $ab$ denotes the multiplication of the
associative algebra $A$. We denote this Jordan algebra associated
to the associative algebra $A$ by $A_J$. In particular, for any
vector space $V$, the space of all linear endomorphisms ${\rm
End}_k (A)_J$ is a Jordan algebra. Jordan algebras have been
intensively studied from algebraic point of view as well as for
their applications to quantum mechanics, differential geometry,
algebraic geometry or functional analysis - for details see
\cite{Mc} and the references therein.

\subsection*{Jacobi-Jordan algebras} A \emph{Jacobi-Jordan algebra} \cite{BF}
(a JJ-algebra, for short) is an algebra $A = (A, \cdot)$ such that
for any $a$, $b\in A$:
\begin{equation}\eqlabel{jjdef}
a \cdot b = b \cdot a, \qquad \sum_{(c)} a \cdot (b \cdot c) = 0
\end{equation}
that is, $\cdot$ is commutative and satisfies the Jacobi identity
$a \cdot (b \cdot c) + b\cdot (c \cdot a) + c \cdot (a \cdot b) =
0$. An important feature of JJ-algebras is that they are all
nilpotent. By ${\rm Aut}_{\rm JJ-Alg} (A)$ we will denote the
automorphism group of a JJ algebra $A$: i.e. the group with
respect to the composition of all linear automorphisms $f: A \to
A$ satisfying $f(a\cdot b) = f(a) \cdot f(b)$, for all $a$, $b\in
A$. A clue result \cite[Lemma 2.2]{BF} proves that any JJ-algebra
is a Jordan algebra, that is the second identity of
\equref{jorddef} also holds. As is shown in \cite{BF} even if at
first glance these algebras seem quite similar to Lie algebras,
they are in fact very different: the commutativity of the
multiplication $\cdot$ instead of the skew-symmetry satisfied by
Lie algebras gives rise to a radically different concept. Of
course, any vector space $V$ becomes a JJ algebra with the trivial
multiplication: $ x \cdot y = 0$, for any $x$, $y \in V$. Such a
JJ algebra is called abelian and will be denoted by $V_0$.
Examples and the classification of small dimensional JJ algebras
is given in \cite{BF}. Here we give two examples that will be used
later on. Throughout the paper we will use the following
convention: the multiplication of an algebra $A$ will only be
defined on the elements of its basis and we only write down the
non-zero multiplications.

\begin{examples} \exlabel{heisexample}
1.  The commutative Heisenberg JJ algebra \cite{BF} is the $(2n +
1)$-dimensional algebra $\mathfrak{h} (2n + 1, k)$ having $\{e_1,
\cdots, e_n, f_1, \cdots, f_n, z \}$ as a basis over $k$ and
multiplication defined by $e_i \cdot f_i = f_i \cdot e_i := z$,
for all $i = 1, \cdots, n$.

2. An interesting example of an infinite dimensional JJ algebra
can be constructed as follows. Let $V$ be a vector space, $f \in
{\rm End}_k (V)$ with $f^2 = 0$ and $v_0\in {\rm Ker} (f)$. Then
$V_{(f, \, v_0)} := k \times V$ is a JJ algebra via the
multiplication defined for any $p$, $q \in k$ and $x$, $y \in V$
by:
\begin{equation} \eqlabel{metcodim1}
(p, \, x) \cdot (q, \, y) := (0, \, pq \, v_0 + p f(y) + q f(x) \,
)
\end{equation}
Indeed, since $v_0 \in {\rm Ker} (f)$ and $f^2 = 0$ one can easily
see that $(r, \, z) \cdot \bigl( (p, \, x) \cdot (q, \, y) \bigl)
= 0$, for all $p$, $q$, $r \in k$ and $x$, $y$, $z \in V$ and thus
the Jacobi identity is trivially fulfilled. This shows that
$V_{(f, \, v_0)}$ is a $3$-step nilpotent JJ algebra. It is worth
pointing out that $V_{(f, \, v_0)}$ is not a Lie algebra, if ${\rm
char} (k) \neq 2$ and $(f, \, v_0) \neq (0, 0)$.
\end{examples}

The first important difference between JJ algebras and Lie
algebras is highlighted by the next result which provides a new
class of solutions for the famous quantum Yang-Baxter equation. In
what follows $\ot$ denotes $\ot_k$, the tensor product over $k$
and for a linear map $R \in {\rm End}_k (A \ot A)$ we shall denote
by $R^{12} := R \ot {\rm Id}_A$, $R^{23} := {\rm Id}_A \ot R \in
{\rm End}_k (A \ot A \ot A)$. $R$ is called a solution of the
\emph{quantum Yang-Baxter equation} if $R^{12} R^{23} R^{12} =
R^{23} R^{12} R^{23}$ in ${\rm End}_k (A \ot A \ot A)$. For an
algebra $A = (A, \cdot)$ we shall denote by $Z(A)$ the
\emph{Leibniz center} of $A$, i.e. $ Z(A) := \{ z \in A \, | \,
z\cdot A = A \cdot z = 0\}$. Having in mind that any Lie algebra
is also a Leibniz algebra we can prove the following:

\begin{proposition}\prlabel{qybe}
Let $A = (A, \cdot)$ be an algebra, $\alpha$, $\beta \in k^*$ and
$0 \neq z \in Z(A)$. Then, the linear map
$$
R = R_{\alpha, \, \beta, \, z} : A \ot A \to A \ot A, \qquad R (a
\ot b) := \alpha \, b \ot a + \beta \, z \ot (a\cdot b)
$$
is a solution of the quantum Yang-Baxter equation if and only if
$(A, \cdot)$ is a Leibniz algebra. In particular, if $A$ is a JJ
algebra, then $R_{\alpha, \, \beta, \, z}$ is a solution of the
quantum Yang-Baxter equation if and only if ${\rm char} (k) = 2$
or $A$ is at most $3$-step nilpotent algebra.
\end{proposition}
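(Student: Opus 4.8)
The plan is to verify the quantum Yang-Baxter equation by brute force on a generic element $a \ot b \ot c \in A \ot A \ot A$, comparing the two sides $R^{12} R^{23} R^{12}$ and $R^{23} R^{12} R^{23}$ monomial by monomial, and then to translate the resulting tensor identity into an identity in $A$. The only structural input required is the centrality of $z$: since $z \cdot x = x \cdot z = 0$ for all $x \in A$, every time $R$ is applied to a slot already occupied by $z$ the $\beta$-part produces a factor of the form $z \cdot (-)$ or $(-) \cdot z$ and hence vanishes. This drastically thins out the surviving terms.

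First I would record the two elementary actions $R^{12}(a \ot b \ot c) = \alpha\, b \ot a \ot c + \beta\, z \ot (a \cdot b) \ot c$ and $R^{23}(a \ot b \ot c) = \alpha\, a \ot c \ot b + \beta\, a \ot z \ot (b \cdot c)$, and then iterate three times on each side. Each single application of $R$ doubles the number of monomials, so each threefold composition momentarily produces eight terms; repeatedly using $z \in Z(A)$ to discard the ones that die by centrality, I expect the two sides to share the four monomials $\alpha^3\, c \ot b \ot a$, $\alpha^2\beta\, z \ot (b\cdot c) \ot a$, $\alpha^2\beta\, z \ot b \ot (a\cdot c)$ and $\alpha^2\beta\, c \ot z \ot (a\cdot b)$, so that the whole equation collapses to the comparison of the leftover $z \ot z \ot (-)$ terms, namely
\[
\alpha\beta^2\, z \ot z \ot \bigl( (a\cdot b)\cdot c \bigr) \;=\; \alpha\beta^2\, z \ot z \ot \bigl( (a\cdot c)\cdot b + a\cdot (b\cdot c) \bigr).
\]
Since $\alpha, \beta \in k^*$ and $0 \neq z$, the vector $z \ot z \in A \ot A$ is nonzero, so the map $x \mapsto z \ot z \ot x$ is injective; hence the displayed identity holds for all $a,b,c$ if and only if $(a\cdot b)\cdot c = (a\cdot c)\cdot b + a\cdot (b\cdot c)$, which is precisely the Leibniz law \equref{lbz}. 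This settles the first equivalence.

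For the second assertion I would specialize to a JJ algebra $A$ and ask when it is also Leibniz. Using commutativity to rewrite $(a\cdot b)\cdot c = c\cdot(a\cdot b)$ and $(a\cdot c)\cdot b = b\cdot(a\cdot c)$, the Leibniz law becomes $c\cdot(a\cdot b) = a\cdot(b\cdot c) + b\cdot(a\cdot c)$; on the other hand the Jacobi identity \equref{jjdef}, again after applying commutativity, reads $a\cdot(b\cdot c) + b\cdot(a\cdot c) + c\cdot(a\cdot b) = 0$. Substituting the latter into the former collapses Leibniz to the single condition $2\, c\cdot(a\cdot b) = 0$ for all $a,b,c \in A$. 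If ${\rm char}(k) = 2$ this is automatic, so $R_{\alpha,\beta,z}$ is always a solution; if ${\rm char}(k) \neq 2$ it is equivalent to $c\cdot(a\cdot b) = 0$, i.e. $A \cdot A' = 0$, which by commutativity is exactly $A^3 = 0$, the statement that $A$ is at most $3$-step nilpotent.

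The genuinely delicate part is not any single idea but the bookkeeping in the two threefold expansions: one must be disciplined about which of the eight monomials vanish by centrality of $z$ before the two sides can be lined up and cancelled against one another. Everything else — the injectivity of $z \ot z \ot (-)$ and the Jacobi/commutativity manipulation reducing Leibniz to $2\,c\cdot(a\cdot b)=0$ — is immediate.
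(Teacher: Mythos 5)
Your proposal is correct and follows essentially the same route as the paper: expand both threefold compositions, use centrality of $z$ to kill the extra monomials, match the common terms so that only the $z \ot z \ot (-)$ terms remain (giving the Leibniz law after cancelling $\alpha\beta^2$ and using that $z \ot z \ot (-)$ is injective), and then combine commutativity with the Jacobi identity to reduce the Leibniz condition on a JJ algebra to $2\, c \cdot (a\cdot b) = 0$. The intermediate expansions you predict agree with those displayed in the paper's proof, so there is nothing to add.
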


\begin{proof}
Let $a$, $b$, $c\in A$. Taking into account that $z \in Z(A)$, a
straightforward computation proves that:
\begin{eqnarray*}
&& R^{12} R^{23} R^{12} (a \ot b \ot c) = \alpha^3 \, c \ot b \ot
a + \alpha^2 \beta \, z \ot b\cdot c \ot a + \alpha^2 \beta \, z
\ot b \ot a\cdot c \\
&& +  \, \alpha^2 \beta \, c \ot z \ot a\cdot b + \alpha \beta^2
\, z \ot z \ot (a\cdot b) \cdot c, \qquad {\rm and} \\
&& R^{23} R^{12} R^{23} (a \ot b \ot c) = \alpha^3 \, c \ot b \ot
a + \alpha^2 \beta \, c \ot z \ot a\cdot b + \alpha^2 \beta \, z
\ot b \ot a\cdot c \\
&& +  \, \alpha \beta^2 \, z \ot z \ot (a\cdot c) \cdot b +
\alpha^2 \beta \, z \ot (b\cdot c) \ot a + \alpha \beta^2 \, z \ot
z \ot a \cdot (b\cdot c)
\end{eqnarray*}
Thus, $R$ is a solution of the quantum Yang-Baxter equation if and
only if
$$
\alpha \beta^2 \, z \ot z \ot (a\cdot b) \cdot c =  \alpha \beta^2
\, z \ot z \ot (a\cdot c) \cdot b + \alpha \beta^2 \, z \ot z \ot
a \cdot (b\cdot c)
$$
Since the scalars $\alpha$ and $\beta$ are non-zero and $z \neq
0$, the last equation is equivalent to $(a \cdot b) \cdot c = a
\cdot (b \cdot c) + (a \cdot c) \cdot b$, for all  $a$, $b$, $c\in
A$, that is $(A, \cdot)$ is a Leibniz algebra. Assume now that $A$
is a JJ algebra. Then, using the Jacobi identity and the
commutativity of $\cdot$ we obtain that $R$ is a solution of the
quantum Yang-Baxter equation if and only if $ 2 \, c \cdot (a
\cdot b) = 0$, for all $a$, $b$, $c\in A$, and hence the last
statement also follows.
\end{proof}

An example of a JJ algebra which is at most $3$-step nilpotent is
the commutative Heiseberg algebra $\mathfrak{h} (2n + 1, k)$.
Applying \prref{qybe} we construct a family of solutions of the
quantum Yang-Baxter equation arising from any triple of $n\times
n$ symmetric matrices.

\begin{example}
Let $X = (x_{ij})$, $Y = (y_{ij})$, $Z = (z_{ij})$ be three
$n\times n$ symmetric matrices and $A = A_{X, Y, Z}$ the JJ
algebra having the basis $\{e_1, \cdots, e_n, \, f_1, \cdots, f_n,
\, y, \, z \}$ and the multiplication given for any $i$, $j =
\cdots n$ by:
\begin{eqnarray*}
e_i \cdot e_j = e_j \cdot e_i := x_{ij} \, y, \quad f_i \cdot f_j
= f_j \cdot f_i := y_{ij} \, y, \quad e_i \cdot f_j = f_j\cdot e_i
:= \delta_i^j \, z + z_{ij} \, y
\end{eqnarray*}
where $\delta_i^j$ is the Kronecker symbol. The above algebra will
be explicitly constructed in \exref{heiscof}. Then $A$ is at most
$3$-step nilpotent JJ algebra and the Leibniz center $Z(A_{X, Y,
Z})$ is $2$-dimensional having $\{y, z\}$ as a basis. Using,
\prref{qybe} we obtain that for any sclaras $\alpha$, $\beta$,
$\gamma \in k$ the linear map defined for any $a$, $b \in A$ by:
$$
R : A \ot A \to A \ot A, \qquad R (a \ot b) := \alpha \, b \ot a +
(\beta \, y + \gamma z) \ot (a\cdot b)
$$
is a solution of the quantum Yang-Baxter equation in ${\rm End}_k
(A \ot A \ot A)$.
\end{example}

Now, we shall define modules and representations of a JJ algebra.

\begin{definition}\delabel{moduleJJ}
Let $A$ be a JJ-algebra. A (left) \emph{Jacobi-Jordan $A$-module}
(JJ $A$-module, for short) is a vector space $V$ equipped with a
bilinear map $\triangleright : A \times V \to V$, called action,
such that for any $a$, $b\in A$ and $x\in V$:
\begin{eqnarray}
(a\cdot b) \triangleright x = - a \triangleright (b \triangleright
x) - b \triangleright (a \triangleright x) \eqlabel{JJmod}
\end{eqnarray}
We denote by ${}_A{\mathcal J} {\mathcal J}$ the category of all
(left) Jacobi-Jordan $A$-modules having the action preserving
linear maps as morphisms.
\end{definition}

The category ${\mathcal J} {\mathcal J}_A$ of right JJ $A$-modules
is defined analogously and since $A$ is commutative there exists
an isomorphism of categories ${\mathcal J} {\mathcal J}_A \cong
{}_A{\mathcal J} {\mathcal J}$. For this reason we will drop the
adjective 'left' when we speak about JJ modules.

\begin{remark} \relabel{deceasa}
Axiom \equref{JJmod} from the definition of JJ-modules is
surprisingly different from the one corresponding to modules over
an associative algebra, namely $(a\cdot b) \triangleright x = a
\triangleright (b \triangleright x)$. It is instead more close to
the definition of Lie modules but differs from those by the sign
of the first term in the right hand side. We recall that a (left)
Lie $\mathfrak{g}$-module over a Lie algebra $\mathfrak{g} =
(\mathfrak{g}, [-, \, -])$ is a vector space $V$ together with a
bilinear map $ \triangleright : \mathfrak{g} \times V \to V$ such
that $[g, \, h] \triangleright x = g \triangleright (h
\triangleright x) - h \triangleright (g \triangleright x)$, for
all $g$, $h \in \mathfrak{g}$ and $x\in V$. However, we shall see
in \exref{abelian} that axiom \equref{JJmod} is the correct one
from the classical view point of defining modules over a given
mathematical object ${\mathcal O}$ in a $k$-linear category
${\mathcal C}$: that is, a vector space $V$ with a bilinear map
$\triangleright : {\mathcal O} \times V \to V$ such that
${\mathcal O} \times V$ with the 'semi-direct' product type
multiplication $(a, x) \cdot (b, y) := (ab, \, a \triangleright y
+ b\triangleright x)$ has to be an object \emph{inside} the
$k$-linear category ${\mathcal C}$.
\end{remark}

However, representations of a JJ-algebra $A$ will be defined
exactly as the representations of a Jordan algebra, if we see
JJ-algebras as a special case of them. We recall that we have
denoted by ${\rm End}_k (V)_J$, the Jordan algebra of all
$k$-linear endomorphism of a vector space $V$ viewed as a Jordan
algebra via $(f, \, g) \mapsto f \circ g + g \circ f$, for all
$f$, $g \in {\rm End}_k (V)$.

\begin{definition}\delabel{refJJ}
A \emph{representation} of a JJ-algebra $A$ on a vector space $V$
is a morphism of Jordan algebras $\varphi : A \to {\rm End}_k
(V)_J$, i.e. $\varphi (a \cdot b) = \varphi (a) \circ \varphi (b)
+ \varphi (b) \circ \varphi (a)$, for all $a$, $b\in A$, where
$\circ$ is the usual composition of linear endomorphisms of $V$.
\end{definition}

Representations of a JJ-algebra $A$ and Jacobi-Jordan $A$-modules
are two different ways of describing the same structure: more
precisely, there exists an isomorphism of categories
${}_A{\mathcal J} {\mathcal J} \cong {\rm Rep} (A)$, where ${\rm
Rep} (A)$ is the category of representations of $A$ with the
obvious morphisms. The one-to-one correspondence between JJ
$A$-module structures $\triangleright$ on $V$ and representations
$\varphi $ of $A$ on $V$ is given by the two-sided formula
$\varphi (a) (x) :=: - \, a \triangleright x$, for all $a\in A$
and $x\in V$.

We recall the Ado and Harish-Chandra theorem saying that any Lie
algebra over a field of characteristic zero has a faithful
representation. On the other hand, the fundamental result of
Albert \cite{Al} tell us that the Jordan algebra $H_3
(\textbf{O})$ of Hermitian $3\times 3$-matrices with entries in
the octonions can not be imbedded into a Jordan algebra of the
form $A_J$. Thus, is tempting to ask the following question:

\emph{Let $A$ be a Jacobi-Jordan algebra over a field of
characteristic zero. Does $A$ have a faithful representation, i.e.
there exists a representation $\varphi : A \to {\rm End}_k (V)_J$
of $A$ on a vector space $V$ that is an injective map?}

Since ${}_A{\mathcal J} {\mathcal J} \cong {\rm Rep} (A)$, we
prefer to deal from now on only with JJ $A$-modules instead of
representations. First of all, any vector space $V$ is a JJ
$A$-module via the trivial action $a \triangleright x := 0$, for
any $a \in A$ and $x \in A$. On the other hand, using the Jacobi
identity, we can easily see that $A$ and the linear dual $A^* :=
{\rm Hom}_k (A, k)$ are JJ $A$-modules via the canonical actions:
\begin{equation}\eqlabel{canact}
a \triangleright x := a \cdot x, \qquad (a \triangleright a^*) (x)
:= a^* (a \cdot x)
\end{equation}
for all $a$, $x\in A$ and $a^* \in A^*$. Modules over a JJ algebra
will prove themselves to be useful tools for defining Frobenius
algebras, as the symmetric object in the category of JJ algebras.

\begin{definition}\delabel{deffrobjj}
A finite dimensional Jacobi-Jordan algebra $A$ is called
\emph{Frobenius} if there exists an isomorphism $A \cong A^*$ in
${}_A{\mathcal J} {\mathcal J}$.
\end{definition}

\deref{deffrobjj} complies with the classical definition of
Frobenius \cite{frob} on associative algebras: a JJ algebra $A$ is
Frobenius if and only if the regular representation $A \to {\rm
End}_k (A)_J$, $a \mapsto (b \to  a \cdot b)$ and the canonical
representation ${\rm can}: A \to {\rm End}_k (A^*)_J$, ${\rm can}
(a) (a^*) (b) := a^* (a \cdot b)$, for all $a$, $b\in A$ and $a^*
\in A^*$ are equivalent.

\begin{proposition}\prlabel{caracfrj}
A finite dimensional JJ-algebra $A$ is Frobenius if and only if
there exists a bilinear form $B: A \times A \to k$ that is
nondegenerate and invariant, i.e. $B (a \cdot b, \, c) = B (a, \,
b \cdot c)$, for all $a$, $b$, $c \in A$.
\end{proposition}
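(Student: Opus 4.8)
The plan is to set up an explicit, mutually inverse correspondence between isomorphisms $A \cong A^*$ in ${}_A{\mathcal J}{\mathcal J}$ and nondegenerate invariant bilinear forms on $A$, mimicking the classical argument for associative and Lie Frobenius algebras. The bridge is the standard dictionary between bilinear forms and linear maps into the dual: to a bilinear form $B : A \times A \to k$ I would associate the linear map $\Phi_B : A \to A^*$ defined by $\Phi_B(a)(b) := B(a, b)$, and conversely to a linear map $\theta : A \to A^*$ the bilinear form $B_\theta(a, b) := \theta(a)(b)$. These two assignments are obviously mutually inverse, so the whole proposition reduces to checking that, under this dictionary, "$\theta$ is an isomorphism in ${}_A{\mathcal J}{\mathcal J}$" translates into "$B$ is nondegenerate and invariant".

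First I would dispose of the nondegeneracy half, which is pure linear algebra with no JJ-specific content. Since $A$ is finite dimensional, $\dim A = \dim A^*$, so $\Phi_B$ is a linear isomorphism if and only if it is injective, and $\Phi_B$ is injective exactly when $B$ is (left-)nondegenerate. For the reverse bookkeeping one uses surjectivity of $\theta$ together with right-nondegeneracy; commutativity of $\cdot$ will make the two sides of nondegeneracy interchangeable in any case.

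The only step carrying genuine content is matching the module-morphism condition with invariance. Recall from \equref{canact} that $A$ carries the action $a \triangleright x = a\cdot x$ while $A^*$ carries $(a \triangleright a^*)(x) = a^*(a\cdot x)$. Unwinding $\Phi_B(a \triangleright x) = a \triangleright \Phi_B(x)$ and evaluating both sides at an arbitrary $y \in A$, I would compute the left-hand side as $B(a\cdot x, y)$ and the right-hand side as $B(x, a\cdot y)$. Hence $\Phi_B$ is a morphism in ${}_A{\mathcal J}{\mathcal J}$ precisely when $B(a\cdot x, y) = B(x, a\cdot y)$ for all $a$, $x$, $y \in A$. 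Using the commutativity $a\cdot x = x\cdot a$ of the JJ multiplication and relabelling the variables, this is exactly the invariance relation $B(a\cdot b, c) = B(a, b\cdot c)$.

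Finally I would assemble the two implications. If $A$ is Frobenius, choose an isomorphism $\theta : A \to A^*$ in ${}_A{\mathcal J}{\mathcal J}$ and set $B := B_\theta$: the computation above shows $B$ is invariant because $\theta$ is a module map, and nondegenerate because $\theta$ is bijective. Conversely, a nondegenerate invariant $B$ produces $\Phi_B$, which is simultaneously a module morphism (by invariance) and a linear isomorphism (by nondegeneracy and finite dimensionality), hence an isomorphism $A \cong A^*$ in ${}_A{\mathcal J}{\mathcal J}$, so $A$ is Frobenius. I do not anticipate any real obstacle: the sole point requiring care is the action-preservation computation, and in particular the use of commutativity to pass from the \emph{module form} $B(a\cdot x, y) = B(x, a\cdot y)$ to the stated invariance $B(a\cdot b, c) = B(a, b\cdot c)$; everything else is formal.
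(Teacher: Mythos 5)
Your proposal is correct and follows essentially the same route as the paper: both pass through the standard bijection between linear maps $A \to A^*$ and bilinear forms given by $f(a)(b) :=: B(a,b)$, identify linear isomorphisms with nondegenerate forms, and check that the JJ $A$-module map condition (with the canonical actions of \equref{canact}) translates, after using commutativity, into the invariance identity. The paper merely states these two verifications as immediate, whereas you spell out the action-preservation computation explicitly; there is no substantive difference.
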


\begin{proof}
Follows from the one-to-one correspondence between the set of all
$k$-linear isomorphisms $f: A \to A^*$ and the set of all
nondegenerate bilinear forms $B : A \times A \to k$ given by the
two-sided formula $f (a) (b) :=: B(a, b)$, for all $a$, $b\in A$.
Under this bijection, it is easy to see that the JJ $A$-module
maps $f: A \to A^*$ correspond to those bilinear forms $B: A\times
A \to k$ that are invariant.
\end{proof}

\prref{caracfrj} provides an efficient criterion for testing if a
JJ algebra is Frobenius.

\begin{examples} \exlabel{frobex}
1. Any finite dimensional abelian JJ algebra is Frobenius. Indeed,
as the multiplication on $A$ is trivial, then any linear
isomorphism $f : A \to A^*$ is an isomorphism of JJ $A$-modules.

2. Let $A : = A_{1, 2}$ be the $2$-dimensional JJ algebra with
basis $\{e_1, \, e_2\}$ and the multiplication $e_1 \cdot e_1 =
e_2$. Then $A_{1, 2}$ is a Frobenius JJ algebra since the bilinear
form defined by $B(e_1, \, e_2) = B(e_2, \, e_1) = B(e_1, \, e_1)
:= 1$ and $B(e_2, \, e_2) := 0$ is non-degenerate and invariant.

3. Let $A := \mathfrak{h} (2n + 1, k)$ be the commutative
Heisenberg JJ algebra from \exref{heisexample}. Then $\mathfrak{h}
(2n + 1, k)$ is not Frobenius. Indeed, let $B: \mathfrak{h} (2n +
1, k) \times \mathfrak{h} (2n + 1, k) \to k$ be an invariant
bilinear form. Then $B (z, \, c) = B (e_i \cdot f_i, \, c) = B
(e_i, \, f_i \cdot c)$, for all $c \in \mathfrak{h} (2n + 1, k)$
and $i = 1, \cdots, n$. By taking $c := z$ and $c := f_j$ we
obtain $B(z, \, z) = B(z, \, f_j) = 0$, for any $j = 1, \cdots,
n$. On the other hand, by taking $c := e_j$ and choosing $i \neq
j$ yields $B(z, \, e_j) = 0$. Thus, $B(z, - ) = 0$, for any
invariant form $B$ on $\mathfrak{h} (2n + 1, k)$, that is $B$ is
degenerate and hence the JJ algebra $\mathfrak{h} (2n + 1, k)$ is
not Frobenius.
\end{examples}

A more detailed analysis of Frobenius JJ algebras will be
performed somewhere else. We mention that Frobenius associative
algebras are studied not only from algebraic point of view but
also for their implications in topology, algebraic geometry and 2D
topological quantum field theories, category theory, Hochschild
cohomology or graph theory (\cite{kadison, Kock}). On the other
hand, Frobenius Lie algebras are studied in pure mathematics
\cite{kat, medina} as well as in physics \cite{fig, pelc}.

\section{Crossed products and the global extension problem} \selabel{crossed}

In this section we deal with the GE problem, as formulated in the
introduction, in the context of JJ algebras. Let $A$ be a
JJ-algebra, $E$ a vector space, $\pi : E \to A$ a linear
epimorphism of vector spaces with $V: = {\rm Ker} (\pi)$ and
denote by $i: V \to E$ the inclusion map. We say that a linear map
$\varphi: E \to E$ \emph{stabilizes} $V$ (resp.
\emph{co-stabilizes} $A$) if $\varphi \circ i = i$ (resp. $\pi
\circ \varphi = \pi$). Two JJ algebra structures $\cdot $ and
$\cdot'$ on $E$ such that $\pi : E \to A$ is a morphism of JJ
algebras are called \emph{cohomologous} and we denote this by $(E,
\cdot) \approx (E, \cdot')$, if there exists a JJ algebra map
$\varphi: (E, \cdot) \to (E, \cdot')$ which stabilizes $V$ and
co-stabilizes $A$. One can easily see that any such morphism is
bijective and therefore $\approx$ is an equivalence relation on
the set of all JJ-algebra structures on $E$ such that $\pi : E \to
A$ is a JJ algebra map. The set of all equivalence classes via the
equivalence relation $\approx$ will be denoted by ${\rm Gext} \,
(A, \, E)$ and it is the classifying object for the GE problem. In
the sequel we will prove that ${\rm Gext} \, (A, \, E)$ is
parameterized by a global non-abelian cohomological type object
${\mathbb G} {\mathbb H}^{2} \, (A, \, V)$ which will be
explicitly constructed. To begin with, we introduce the crossed
product of JJ algebras:

\begin{definition} \delabel{hocdat}
Let $A = (A, \cdot)$ be a JJ algebra and $V$ a vector space. A
\emph{crossed data} of $A$ by $V$ is a system $\Theta(A, V) =
(\triangleright, \, \vartheta, \, \cdot_V)$ consisting of three
bilinear maps
$$
\triangleright \, \, : A \times V \to V, \quad \vartheta \,\, : A
\times A \to V, \quad \cdot_V \, : V \times V \to V
$$
\end{definition}

For a crossed data $ \Theta(A, V) = (\triangleright, \, \vartheta,
\, \cdot_V)$ we denote by $A \# V = A \#_{(\triangleright, \,
\vartheta, \, \cdot_V)} V $ the vector space $A \times V$ with the
multiplication $\bullet$ defined for any $a$, $b\in A$ and $x$, $y
\in V$ by:
\begin{equation} \eqlabel{hoproduct2}
(a, \, x) \bullet (b, \, y) := (a\cdot b, \,\, \vartheta (a, \, b)
+ a \triangleright y + b \triangleright x + x \cdot_V y)
\end{equation}
$A \# V$ is called the \emph{crossed product} associated to
$\Theta(A, V)$ if it is a JJ algebra with the multiplication given
by \equref{hoproduct2}. In this case $ \Theta(A, V) =
(\triangleright, \, \vartheta, \, \cdot_V)$ is called a
\emph{crossed system} of $A$ by $V$. Our next result provides the
necessary and sufficient conditions for $A \# V$ to be a crossed
product:

\begin{proposition}\prlabel{hocprod}
Let $A$ be a JJ algebra, $V$ a vector space and $\Theta(A, V) =
(\triangleright, \, \vartheta, \, \cdot_V)$ a crossed data of $A$
by $V$. Then $A \# V$ is a crossed product if and only if the
following compatibility conditions hold for any $a$, $b$, $c \in
A$ and $x$, $y\in V$:
\begin{enumerate}
\item[(J1)] $(V, \, \cdot_V)$ is a JJ algebra and the bilinear map
$\vartheta : A \times A \to V$ is symmetric

\item[(J2)] $(a \cdot b) \triangleright x + a \triangleright (b
\triangleright x) + b \triangleright (a \triangleright x) + x
\cdot_V \vartheta (a, \, b) = 0$

\item[(J3)] $a \triangleright (x \cdot_V \, y) + x \cdot_V (a
\triangleright y) + y \cdot_V (a\triangleright x) = 0$

\item[(J4)] $\sum_{(c)} \, \vartheta (a, \, b\cdot c) + \,
\sum_{(c)} a \triangleright \vartheta (b, \, c) = 0$
\end{enumerate}
\end{proposition}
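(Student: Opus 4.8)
The plan is to use the fact that $A\#V$ is a JJ algebra exactly when its multiplication $\bullet$ from \equref{hoproduct2} is both commutative and satisfies the Jacobi identity, and to translate each of these two requirements into the list (J1)--(J4). Since the first coordinate of $(a,x)\bullet(b,y)$ is just $a\cdot b$ and $A$ is already a JJ algebra, both identities hold automatically on the first coordinate; for the Jacobi identity this is the observation that the first coordinate of $\sum_{(c)}(a,x)\bullet\bigl((b,y)\bullet(c,z)\bigr)$ equals $\sum_{(c)} a\cdot(b\cdot c)=0$. Everything therefore reduces to analysing the second coordinate, which lives in $V$.

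First I would treat commutativity. Comparing the $V$-components of $(a,x)\bullet(b,y)$ and $(b,y)\bullet(a,x)$ gives the requirement $\vartheta(a,b)+x\cdot_V y=\vartheta(b,a)+y\cdot_V x$ for all $a,b\in A$ and $x,y\in V$. Setting $x=y=0$ forces $\vartheta$ to be symmetric, after which the identity collapses to $x\cdot_V y=y\cdot_V x$, i.e.\ $\cdot_V$ is commutative. Together with the Jacobi identity for $\cdot_V$ obtained below, this gives (J1).

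The heart of the argument is the Jacobi identity for $\bullet$. I would compute $(b,y)\bullet(c,z)$, substitute the result into $(a,x)\bullet(-)$, read off the $V$-component of $(a,x)\bullet\bigl((b,y)\bullet(c,z)\bigr)$, and then apply $\sum_{(c)}$ to obtain an element of $V$ that must vanish. The key structural remark is that this expression is a sum of terms each homogeneous of degree $0$, $1$, $2$ or $3$ in the triple $(x,y,z)$ jointly: under the simultaneous rescaling $(x,y,z)\mapsto(tx,ty,tz)$ the four homogeneous components pick up distinct powers of $t$, so the whole $V$-component vanishes identically if and only if each homogeneous component vanishes separately (isolate them by successively setting some of $x,y,z$ to $0$). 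The degree-$0$ component is precisely $\sum_{(c)}\vartheta(a,b\cdot c)+\sum_{(c)} a\triangleright\vartheta(b,c)$, which is (J4). Grouping the degree-$1$ component by which single variable it is linear in, the $x$-terms assemble into $(b\cdot c)\triangleright x+b\triangleright(c\triangleright x)+c\triangleright(b\triangleright x)+x\cdot_V\vartheta(b,c)$, which is exactly (J2) for the pair $(b,c)$, and the $y$- and $z$-terms give (J2) for the cyclically shifted pairs; hence the degree-$1$ part vanishes iff (J2) holds. Grouping the degree-$2$ component by which element of $A$ acts, the $a$-terms assemble into $a\triangleright(y\cdot_V z)+y\cdot_V(a\triangleright z)+z\cdot_V(a\triangleright y)$, which is (J3), and the $b$- and $c$-groups give the cyclic instances; so the degree-$2$ part is equivalent to (J3). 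Finally the degree-$3$ component is the single term $\sum_{(c)} x\cdot_V(y\cdot_V z)$, whose vanishing together with the commutativity already established is the Jacobi identity for $(V,\cdot_V)$, completing (J1). Assembling these four equivalences proves the proposition.

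I expect the only real obstacle to be the bookkeeping: correctly expanding the iterated product $(a,x)\bullet\bigl((b,y)\bullet(c,z)\bigr)$ into its roughly ten summands, applying $\sum_{(c)}$, and verifying that the homogeneous pieces regroup into the stated cyclic instances of (J2)--(J4) rather than some skewed variants. The homogeneity-in-$(x,y,z)$ decomposition is what makes the separation into independent conditions rigorous and keeps this purely a matter of careful, routine verification.
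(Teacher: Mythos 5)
Your proposal is correct and follows essentially the same route as the paper: commutativity forces $\vartheta$ symmetric and $\cdot_V$ commutative, and the Jacobi identity is reduced by trilinearity to the four cases according to how many arguments come from $V$ (your multidegree decomposition in $(x,y,z)$ is exactly the paper's reduction to generators $(a,0)$ and $(0,x)$), yielding (J4), (J2), (J3) and the Jacobi identity for $\cdot_V$ respectively. The only caveat is that your rescaling-by-$t$ justification can fail over a small field, but the fallback you give (setting subsets of $x,y,z$ to zero) is the correct argument and matches the paper.
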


\begin{proof}
We start with the commutativity condition: one can easily see that
$\bullet$ defined by \equref{hoproduct2} is commutative if and
only if $\vartheta: A \times A \to V$ is symmetric and $\cdot_V :
V \times V \to V$ is commutative. For the rest of the proof it is
worth noticing that since in $A \# V$ we have $(a, x) = (a, 0) +
(0, x)$, the Jacobi condition holds if and only if it holds for
all generators of $A \# V$, i.e. for the set $\{(a, \, 0) ~|~ a
\in A\} \cup \{(0, \, x) ~|~ x \in V\}$. We only provide a sketch
of the proof, the rest of the details being left to the reader.
For instance, the Jacobi condition for the multiplication given by
\equref{hoproduct2} holds in \{(0, x), \, (0, y), \, (a, 0)\} if
and only if (J3) holds. Similarly, the Jacobi condition holds in
\{(a, 0), \, (b, 0),\, (0, z)\} if and only if (J2) holds while
the Jacobi condition holds in  \{(a, 0), \, (b, 0),\, (c, 0)\} if
and only if (J4) holds. Finally,  the Jacobi condition holds in
\{(0, x), \, (0, y), \, (0, z)\} if and only if $\cdot_V : V
\times V \to V$ also satisfies the Jacobi condition. This together
with the commutativity proved below shows that $\cdot_V$ is a JJ
algebra structure on $V$.
\end{proof}

In what follows a crossed system of $A$ by $V$ will be seen as a
system of bilinear maps $\Theta(A, V) = (\triangleright, \,
\vartheta, \, \cdot_V)$ satisfying axioms (J1)-(J4) and the set of
all crossed systems of $A$ by $V$ will be denoted by ${\mathcal
C}{\mathcal S} \, (A, \, V)$.

\begin{examples} \exlabel{abelian}
1. By applying \prref{hocprod} we obtain that a crossed data
$(\triangleright, \, \vartheta, \, \cdot_V)$ for which $\cdot_V$
is the trivial multiplication on $V$ is a crossed system if and
only if $(V, \triangleright)$ is a JJ $A$-module and $\vartheta :
A \times A \to V$ is a symmetric bilinear map satisfying the
compatibility condition (J4). A symmetric bilinear map $\vartheta
: A \times A \to V$ satisfying (J4) will be called a
$\triangleright$-\emph{cocycle} by analogy with Lie algebras
\cite{CE}. Furthermore, if $\vartheta$ is also the trivial map,
then the associated crossed product will be called the
\emph{trivial extension} of the JJ algebra $A$ by the $A$-module
$V$.

2. A crossed system $\Theta(A, V) = (\triangleright, \, \vartheta,
\, \cdot_V)$ for which $\vartheta$ is the trivial map is called a
\emph{semidirect system} of $A$ by $V$. In this case $\vartheta$
will be omitted when writing down $\Theta(A, V)$. Axioms in
\prref{hocprod} boil down to the following: $\Theta(A, V) =
(\triangleright, \, \cdot_V)$ is a semidirect system if and only
if $(V, \, \triangleright)$ is a JJ $A$-module, $(V, \cdot_V)$ is
a JJ algebra and for all $a\in A$, $x$, $y\in V$ we have:
\begin{eqnarray}
a \triangleright (x \cdot_V \, y) + x \cdot_V (a \triangleright y)
+ y \cdot_V (a\triangleright x) = 0 \eqlabel{semy}
\end{eqnarray}
The associated crossed product will be called a \emph{semidirect
product} of JJ algebras and will be denoted by $A \ltimes V := A
\ltimes_{(\triangleright, \, \cdot_V)} V$. The terminology is
motivated in \coref{splialg} by the fact that exactly as in the
case of Lie algebras, this construction describes split
epimorphisms within the category of JJ algebras.
\end{examples}

The crossed product is the tool to answer the GE problem. Indeed,
first we observe that the canonical projection $\pi_A : A \# V \to
A$, $\pi_A (a, x) := a$ is a surjective JJ algebra map with kernel
$\{0\} \times V \cong V$. Hence, the JJ algebra $A \# V$ is an
extension of the JJ algebra $A$ by the JJ algebra $(V, \cdot_V)$
via
\begin{eqnarray} \eqlabel{extenho1}
\xymatrix{ 0 \ar[r] & V \ar[r]^{i_{V}} & A \# \, V
\ar[r]^{\pi_{A}} & A \ar[r] & 0 }
\end{eqnarray}
where $i_V (x) = (0, \, x)$. Conversely, we have:

\begin{proposition}\prlabel{hocechiv}
Let $A$ be a JJ algebra, $E$ a vector space and $\pi : E \to A$ an
epimorphism of vector spaces with $V = {\rm Ker} (\pi)$. Then any
JJ algebra structure $\cdot_E $ which can be defined on the vector
space $E$ such that $\pi : (E, \cdot_E) \to A$ becomes a morphism
of JJ algebras is isomorphic to a crossed product $A \# V$.
Furthermore, the isomorphism of JJ algebras $ (E, \cdot_E) \cong A
\# V$ can be chosen such that it stabilizes $V$ and co-stabilizes
$A$.

Therefore, any JJ algebra structure on $E$ such that $\pi : E \to
A$ is a JJ algebra map is cohomologous to an extension of the form
\equref{extenho1}.
\end{proposition}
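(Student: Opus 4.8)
The plan is to reconstruct the crossed product data $(\triangleright, \vartheta, \cdot_V)$ from the given structure $\cdot_E$ by transporting it along a linear section of $\pi$. The key observation is that, since $\pi: E \to A$ is a surjection of vector spaces with kernel $V$, we may choose a $k$-linear section $s: A \to E$ of $\pi$ (so $\pi \circ s = {\rm Id}_A$) and then the map $\varphi: A \times V \to E$, $\varphi(a, x) := s(a) + x$, is a linear isomorphism. The idea is to use $\varphi$ to transport the multiplication $\cdot_E$ from $E$ back to $A \times V$ and to verify that the resulting multiplication has precisely the shape of \equref{hoproduct2}.

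First I would fix such a section $s$ and define the three bilinear maps explicitly. Since $\pi$ is an algebra map and $\pi \circ s = {\rm Id}_A$, for any $a, b \in A$ we have $\pi(s(a) \cdot_E s(b)) = a \cdot b = \pi(s(a \cdot b))$, so the element $s(a) \cdot_E s(b) - s(a \cdot b)$ lies in $V = {\rm Ker}(\pi)$; I set $\vartheta(a, b) := s(a) \cdot_E s(b) - s(a \cdot b)$. Similarly, for $a \in A$ and $x \in V$, one checks $\pi(s(a) \cdot_E x) = a \cdot \pi(x) = 0$, so $s(a) \cdot_E x \in V$ and I define $a \triangleright x := s(a) \cdot_E x$. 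Finally, since $V$ is an ideal (being the kernel of the algebra map $\pi$), the restriction of $\cdot_E$ to $V \times V$ lands in $V$, giving $\cdot_V := \cdot_E|_{V \times V}$. A direct computation using bilinearity of $\cdot_E$ then shows that under $\varphi$ the transported multiplication is exactly \equref{hoproduct2}:
\begin{eqnarray*}
\varphi(a, x) \cdot_E \varphi(b, y) &=& (s(a) + x) \cdot_E (s(b) + y) \\
&=& s(a) \cdot_E s(b) + s(a) \cdot_E y + x \cdot_E s(b) + x \cdot_E y \\
&=& s(a \cdot b) + \vartheta(a, b) + a \triangleright y + b \triangleright x + x \cdot_V y,
\end{eqnarray*}
which is $\varphi\bigl(a \cdot b, \; \vartheta(a, b) + a \triangleright y + b \triangleright x + x \cdot_V y\bigr)$, matching \equref{hoproduct2}.

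At this point the transported structure on $A \times V$ is a JJ algebra (being isomorphic, via $\varphi$, to the JJ algebra $(E, \cdot_E)$), and it has the form of a crossed product; hence by \prref{hocprod} the data $(\triangleright, \vartheta, \cdot_V)$ automatically satisfies axioms (J1)--(J4) and is a genuine crossed system, so $A \# V$ is a crossed product and $\varphi: A \# V \to (E, \cdot_E)$ is an isomorphism of JJ algebras. For the final claims about stabilizing $V$ and co-stabilizing $A$, I would observe that $\varphi(0, x) = x = i(x)$ for $x \in V$, so $\varphi$ restricts to the identity on $V$ (stabilizes $V$), and $\pi(\varphi(a, x)) = \pi(s(a) + x) = a = \pi_A(a, x)$, so $\pi \circ \varphi = \pi_A$ (co-stabilizes $A$). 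The very last sentence is then immediate: since $A \# V$ sits in the extension \equref{extenho1} and $(E, \cdot_E) \cong A \# V$ via an isomorphism compatible with the two projections, $(E, \cdot_E)$ is cohomologous to such an extension in the sense of the relation $\approx$ defined earlier.

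I do not expect a serious obstacle here; the argument is essentially the standard "split the short exact sequence linearly and read off the cocycle data" technique. The only point requiring a little care is verifying that $\vartheta$ is well-defined with values in $V$ and that $a \triangleright x$ and $x \cdot_V y$ likewise land in $V$ — all of which follow from $\pi$ being an algebra map with kernel $V$ — and then trusting \prref{hocprod} to supply axioms (J1)--(J4) for free rather than checking them by hand.
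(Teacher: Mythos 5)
Your proposal is correct and follows essentially the same route as the paper: choose a linear section $s$ of $\pi$, define $\vartheta$, $\triangleright$, $\cdot_V$ by the same formulas, transport $\cdot_E$ along $\varphi(a,x) = s(a)+x$ to recover the multiplication \equref{hoproduct2}, and invoke \prref{hocprod} to obtain the crossed system axioms. The only difference is that you spell out why the three maps are well-defined with values in $V$, a point the paper leaves to the reader.
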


\begin{proof}
Let $\cdot_E $ be a JJ algebra structure of $E$ such that $\pi:
(E, \cdot_E) \to A$ is a JJ algebra map. Since we are working over
a field, we can find a $k$-linear section $s : A \to E$ of $\pi$,
i.e. $\pi \circ s = {\rm Id}_{A}$. It follows that $\varphi : A
\times V \to E$, $\varphi (a, x) := s(a) + x$ is an isomorphism of
vector spaces with the inverse $\varphi^{-1} (y) = \bigl(\pi(y),
\, y - s (\pi(y)) \bigl)$, for all $y\in E$. Using the section $s$
we can define the following two bilinear maps:
\begin{eqnarray*}
\triangleright &=& \triangleright_{s} \, \, : A \times V \to V,
\,\,\,\,
a \triangleright x := s(a) \cdot_E x \eqlabel{act2}\\
\vartheta &=& \vartheta_s \,\,\, : A \times A \to V, \,\,\,\,
\vartheta (a, b) := s(a) \cdot_E s(b) - s(a \cdot b)
\eqlabel{coc1}
\end{eqnarray*}
where $a$, $b\in A$ and $x\in V$ and let $\cdot_V : V \times V \to
V$ be the restriction of $\cdot_E $ at $V$, i.e. $x \cdot_V \, y
:= x \cdot_E y$, for all $x$, $y\in V$. It is straightforward to
see that these are well-defined maps. Finally, using the system
$(\triangleright, \, \vartheta, \, \cdot_V)$ connecting $A$ and
$V$ we can prove that the unique JJ algebra structure $\bullet$
that can be defined on the direct product of vector spaces $A
\times V$ such that $\varphi : A \times V \to (E, \cdot_E)$ is an
isomorphism of JJ algebras is given by:
\begin{equation} \eqlabel{hoproduct}
(a, x) \bullet (b, y) := (a\cdot b, \, \vartheta (a, b) + a
\triangleright y + b \triangleright x + x \cdot_V y)
\end{equation}
for all $a$, $b\in A$, $x$, $y \in V$. Indeed, let $\bullet $ be
such a JJ algebra structure on $A\times V$. Then we have:
\begin{eqnarray*}
(a, x) \bullet (b, y) &=& \varphi^{-1} \bigl(\varphi(a, x) \cdot_E
\varphi(b, y)\bigl) = \varphi^{-1} \bigl(  ( s(a) + x) \cdot_E (s(b) + y) \bigl) \\
&=& \varphi^{-1} ( s(a) \cdot_E s(b) + s(a) \cdot_E y + x \cdot_E s(b) + x \cdot_V y ) \\
&=& \bigl (a \cdot b, \,  s(a) \cdot_E s(b) - s (a\cdot b) + s(a) \cdot_E y + x \cdot_E s(b) + x\cdot_V y \bigl) \\
&=& \bigl(a \cdot b, \, \vartheta (a, \, b) + a \triangleright y +
b \triangleright x + x \cdot_V y )
\end{eqnarray*}
which is the desired multiplication. Thus, $\varphi : A \# V \to
(E, \cdot_E)$ is an isomorphism of JJ algebras which stabilizes
$V$ and co-stabilizes $A$. This finishes the proof.
\end{proof}

We mentioned briefly at the end of \exref{abelian} that the
semidirect product of JJ algebras characterizes split epimorphisms
within the category. This is what we prove next:

\begin{corollary} \colabel{splialg}
A JJ algebra map $\pi: B \to A$ is a split epimorphism in the
category of JJ algebras if and only if there exists an isomorphism
of JJ algebras $B \cong A\ltimes V$, where $V = {\rm Ker} (\pi)$
and $A \ltimes V$ is a semidirect product of JJ algebras.
\end{corollary}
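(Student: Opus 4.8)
The plan is to deduce both implications from \prref{hocechiv} together with the explicit form of the semidirect multiplication recorded in \exref{abelian}. The guiding principle is that the feature distinguishing a \emph{semidirect} product among all crossed products — namely the vanishing of the cocycle $\vartheta$ — corresponds precisely to the existence of a \emph{multiplicative} (rather than merely linear) section of $\pi$.

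For the implication ``$\Rightarrow$'', suppose $\pi : B \to A$ is a split epimorphism of JJ algebras, so there is a JJ algebra map $s : A \to B$ with $\pi \circ s = \Id_A$. In particular $s$ is a $k$-linear section of $\pi$, hence an admissible choice in the construction of \prref{hocechiv}. First I would run that construction with this specific $s$: it produces a crossed system $(\triangleright_s, \vartheta_s, \cdot_V)$ on $A$ and $V = \Ker(\pi)$ together with an isomorphism $B \cong A \# V$ stabilizing $V$ and co-stabilizing $A$. The key observation is that the cocycle $\vartheta_s(a,b) = s(a)\cdot_B s(b) - s(a\cdot b)$ vanishes identically, because $s$ is now a morphism of JJ algebras and therefore $s(a)\cdot_B s(b) = s(a\cdot b)$. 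With $\vartheta_s = 0$, the crossed product reduces, by \equref{hoproduct2}, exactly to the semidirect multiplication of \exref{abelian}, so $B \cong A \ltimes V$ as required.

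For the implication ``$\Leftarrow$'', suppose $B \cong A \ltimes V$ with $V = \Ker(\pi)$, via an isomorphism that co-stabilizes $A$, i.e. carries $\pi$ to the canonical projection $\pi_A : A \ltimes V \to A$, $\pi_A(a,x) = a$ (this compatibility is the natural reading within the framework of this section). Here I would simply exhibit a multiplicative section of $\pi_A$: the map $s_A : A \to A \ltimes V$, $s_A(a) := (a, 0)$. A direct check using the semidirect multiplication gives $s_A(a)\bullet s_A(b) = (a\cdot b,\, a\triangleright 0 + b\triangleright 0 + 0\cdot_V 0) = (a\cdot b, 0) = s_A(a\cdot b)$, so $s_A$ is a JJ algebra map, and clearly $\pi_A \circ s_A = \Id_A$. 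Transporting $s_A$ across the isomorphism then yields a JJ algebra section of $\pi$, proving that $\pi$ is a split epimorphism.

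The only genuinely delicate point is the first one: recognising that feeding a multiplicative section into the crossed-product machinery of \prref{hocechiv} forces the cocycle to be trivial. Everything else is bookkeeping; in particular, once $\vartheta_s = 0$ the axioms (J2)--(J4) of \prref{hocprod} collapse to the statement that $(\triangleright_s, \cdot_V)$ is a semidirect system, so no separate verification of the semidirect-product axioms is needed.
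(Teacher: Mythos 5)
Your proposal is correct and is essentially identical to the paper's own proof: the paper likewise observes that $s_A(a)=(a,0)$ is a multiplicative section of the canonical projection of $A\ltimes V$, and conversely that feeding a JJ algebra section $s$ into the construction of \prref{hocechiv} makes $\vartheta_s$ trivial, so the crossed product reduces to a semidirect product. Your extra remark about the isomorphism co-stabilizing $A$ is a reasonable (and correct) clarification of a point the paper leaves implicit.
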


\begin{proof}
Firstly, note that in the case of the semidirect product $A\ltimes
V$, the canonical projection $p_A : A \ltimes V \to A$, $p_{A}(a,
\, x) = a$ has a section that is a JJ algebra map defined by $s_A
(a) = (a, \, 0)$, for all $a\in A$. Conversely, let $s: A \to B$
be a JJ algebra map with $\pi \circ s = {\rm Id}_A$. Then, the
bilinear map $\vartheta_s$ constructed in the proof of
\prref{hocechiv} is the trivial map and hence the corresponding
crossed product $A \# V$ reduces to a semidirect product $A
\ltimes V$.
\end{proof}

In light of \prref{hocechiv}, the classification part of the
GE-problem comes down to the classification of the crossed
products associated to all crossed systems of $A$ by $V$. Our next
result parameterizes (iso)morphisms between two such crossed products.

\begin{lemma}\lelabel{HHH}
Let $\Theta(A, V) = (\triangleright, \, \vartheta, \, \cdot_V)$
and $\Theta '(A, V) = (\triangleright ', \, \vartheta ', \,
\cdot_V ')$ be two crossed systems with $A \# V$, respectively $A
\# ' V$, the corresponding crossed products. Then there exists a
bijection between the set of all JJ algebra morphisms $\psi: A \#
V \to A \# ' V$ which stabilize $V$ and co-stabilize $A$ and the
set of all linear maps $r: A \to V$ satisfying the following
compatibilities for all $a$, $b \in A$, $x$, $y \in V$:
\begin{enumerate}
\item[(CH1)] $x \cdot_V y = x \cdot_V ' y$; \item[(CH2)] $a
\triangleright x = a \triangleright ' x + r(a) \cdot_V ' x$;
\item[(CH3)] $\vartheta(a, \,b) + r(a\cdot b) = \vartheta '(a, \,
b) + a \triangleright ' r(b) + b \triangleright ' r(a)  + r(a)
\cdot_V ' r(b)$
\end{enumerate}
Under the above bijection the JJ algebra morphism $\psi =
\psi_{r}: A \# V \to A \# ' V$ corresponding to $r: A \to V$ is
given by $\psi(a, x) = (a, \, r(a) + x)$, for all $a \in A$, $x
\in V$. Moreover, $\psi = \psi_{r}$ is an isomorphism with the
inverse given by $\psi^{-1}_{r} = \psi_{-r}$.
\end{lemma}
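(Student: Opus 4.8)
The plan is to first pin down the shape of any admissible $\psi$, then translate multiplicativity into the three listed conditions by testing on suitable pairs of generators, and finally read off the inverse. Since the inclusions $V \hookrightarrow A\#V$ and $V\hookrightarrow A\#'V$ are both $x\mapsto(0,x)$ and the projections onto $A$ are both $(a,x)\mapsto a$, a linear map $\psi:A\#V\to A\#'V$ stabilizes $V$ and co-stabilizes $A$ exactly when $\psi(0,x)=(0,x)$ for all $x\in V$ and the first component of $\psi(a,x)$ equals $a$. Setting $r(a):={\rm pr}_V\bigl(\psi(a,0)\bigr)$ gives a linear map $r:A\to V$, and linearity together with $\psi(0,x)=(0,x)$ forces $\psi(a,x)=(a,\,r(a)+x)=\psi_r(a,x)$. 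Conversely each $\psi_r$ is visibly a linear bijection that stabilizes $V$ and co-stabilizes $A$, so at the level of linear maps the correspondence $r\leftrightarrow\psi_r$ is already a bijection; the only remaining content is to record when $\psi_r$ is multiplicative.

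Next I would impose $\psi_r\bigl((a,x)\bullet(b,y)\bigr)=\psi_r(a,x)\,\bullet'\,\psi_r(b,y)$ and expand both sides using \equref{hoproduct2}. The first components agree automatically (both equal $a\cdot b$), so the entire content lies in the $V$-component, giving the single identity
\begin{align*}
& r(a\cdot b)+\vartheta(a,b)+a\triangleright y+b\triangleright x+x\cdot_V y \\
&\quad =\vartheta'(a,b)+a\triangleright' r(b)+b\triangleright' r(a)+r(a)\cdot_V' r(b) \\
&\qquad +a\triangleright' y+b\triangleright' x+r(a)\cdot_V' y+x\cdot_V' r(b)+x\cdot_V' y
\end{align*}
required to hold for all $a,b\in A$ and $x,y\in V$.

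I would then separate this identity by specializing the four variables. Putting $a=b=0$ (and using $r(0)=0$, $\vartheta(0,0)=\vartheta'(0,0)=0$) collapses it to $x\cdot_V y=x\cdot_V' y$, which is (CH1). Putting $b=0$, $x=0$ leaves $a\triangleright y=a\triangleright' y+r(a)\cdot_V' y$, which is (CH2). Putting $x=y=0$ leaves $r(a\cdot b)+\vartheta(a,b)=\vartheta'(a,b)+a\triangleright' r(b)+b\triangleright' r(a)+r(a)\cdot_V' r(b)$, which is (CH3). This shows that multiplicativity of $\psi_r$ implies the three compatibilities.

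For the converse I would feed (CH1), (CH2) (applied to \emph{both} $a\triangleright y$ and $b\triangleright x$), and (CH3) back into the displayed identity, checking that the five left-hand terms reproduce the nine right-hand terms. The one point needing care — the step I expect to be the main, though minor, obstacle — is that (CH2) applied to $b\triangleright x$ produces the term $r(b)\cdot_V' x$, which must be matched against the term $x\cdot_V' r(b)$ on the right; these coincide precisely because $\cdot_V'$ is commutative, which is part of axiom (J1) for the crossed system $\Theta'$. Once the matching is verified, $\psi_r$ is a multiplicative bijection, so its set-theoretic inverse is automatically an algebra morphism, and a direct computation gives $\psi_{-r}\circ\psi_r=\psi_r\circ\psi_{-r}={\rm Id}$; hence $\psi_r^{-1}=\psi_{-r}$, completing the proof.
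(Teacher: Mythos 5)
Your proposal is correct and follows essentially the same route as the paper: identify the maps stabilizing $V$ and co-stabilizing $A$ as exactly the $\psi_r$, then translate multiplicativity into (CH1)--(CH3) by testing on the generators $(a,0)$ and $(0,x)$ (your specializations $a=b=0$, $b=x=0$, $x=y=0$ are the same checks). Your explicit note that the sufficiency direction uses the commutativity of $\cdot_V'$ to match $r(b)\cdot_V' x$ with $x\cdot_V' r(b)$ is a detail the paper leaves implicit, but the argument is the same.
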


\begin{proof} We can easily see that a linear map $\psi: A \times V \to A \times V$
stabilizes $V$ and co-stabilizes $A$ if and only if there exists a
uniquely determined linear map $r: A \to V$ such that $\psi(a, x)
= \psi_{r}(a, \,  x) = (a, \, r(a)+ x)$, for all $a \in A$, $x \in
V$. The proof will be finished once we show that $\psi: A \# V \to
A \# ' V$ is a JJ algebra map if and only if the compatibility
conditions (CH1)-(CH3) hold. We are left to check that the
following compatibility holds for all generators of $A \times V$:
\begin{equation}\eqlabel{algebramap222}
\psi\bigl((a, x) \bullet (b, y)\bigl) = \psi\bigl((a, x)\bigl) \,
\bullet ' \, \psi\bigl((b, y)\bigl)
\end{equation}
A direct computation shows that \equref{algebramap222} holds for
the pair $(a, 0)$, $(b, 0)$ if and only if (CH3) is fulfilled
while \equref{algebramap222} holds for the pair $(0, x)$, $(a, 0)$
if and only if (CH2) is satisfied. Finally, \equref{algebramap222}
holds for the pair $(0, x)$, $(0, y)$ if and only if (CH1) holds.
\end{proof}

It is therefore natural to introduce the following:

\begin{definition}\delabel{echiaa}
Let $A$ be a JJ algebra and $V$ a vector space. Two crossed
systems $\Theta(A, V)=(\triangleright, \, \vartheta, \, \cdot_V)$
and $\Theta'(A, V) = (\triangleright ', \, \vartheta ', \, \cdot_V
')$ are called \emph{cohomologous}, and we denote this by
$\Theta(A, V) \approx \Theta'(A, V)$, if and only if $\cdot_V =
\cdot_V '$ and there exists a linear map $r: A \to V$ such that
for any $a$, $b \in A$, $x$, $y \in V$ we have:
\begin{eqnarray}
a \triangleright x &=& a \triangleright ' x + r(a) \cdot_V ' x \eqlabel{compa2}\\
\vartheta(a,\, b) &=& \vartheta '(a, \, b) + a \triangleright '
r(b) + b \triangleright ' r(a) + r(a) \cdot_V ' r(b) - r(a\cdot b)
\eqlabel{compa3}
\end{eqnarray}
\end{definition}

The theoretical answer to the GE-problem now follows as a
conclusion of this section:

\begin{theorem}\thlabel{main1222}
Let $A$ be a JJ algebra, $E$ a vector space and $\pi : E \to A$ a
linear epimorphism of vector spaces with $V = {\rm Ker} (\pi)$.
Then $\approx$ defined in \deref{echiaa} is an equivalence
relation on the set ${\mathcal C} {\mathcal S} (A, \, V)$ of all
crossed systems of $A$ by $V$. If we denote by ${\mathbb G}
{\mathbb H}^{2} \, (A, \, V) := {\mathcal C}\mathcal{S} (A, V)/
\approx $, then the map
\begin{equation} \eqlabel{thprinc}
{\mathbb G} {\mathbb H}^{2} \, (A, \, V) \to {\rm Gext} \, (A, \,
E), \qquad \overline{(\triangleright, \, \vartheta, \, \cdot_V)}
\, \longmapsto \, A \star_{(\triangleright, \, \vartheta, \,
\cdot_V)} \, V
\end{equation}
is bijective, where $\overline{(\triangleright, \, \vartheta, \,
\cdot_V)}$ denotes the equivalence class of $(\triangleright, \,
\vartheta, \, \cdot_V)$ via $\approx$.
\end{theorem}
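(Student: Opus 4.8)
The plan is to reduce everything to \leref{HHH} and \prref{hocechiv}, after first recording the crucial observation that \deref{echiaa} is merely a repackaging of the conditions (CH1)--(CH3). Indeed, comparing \equref{compa2} with (CH2), the rearrangement of (CH3) with \equref{compa3}, and the requirement $\cdot_V=\cdot_V'$ with (CH1), we see that $\Theta(A,V)\approx\Theta'(A,V)$ holds if and only if there is a linear map $r:A\to V$ for which $\psi_r:A\#V\to A\#'V$, $\psi_r(a,x)=(a,\,r(a)+x)$, is a morphism of JJ algebras that stabilizes $V$ and co-stabilizes $A$. By \leref{HHH} every such $\psi_r$ is automatically an isomorphism, with $\psi_r^{-1}=\psi_{-r}$. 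Using this correspondence, I would verify that $\approx$ is an equivalence relation on ${\mathcal C}{\mathcal S}(A,V)$: reflexivity follows from $r=0$, giving $\psi_0={\rm Id}$; symmetry follows since $\psi_{-r}=\psi_r^{-1}:A\#'V\to A\#V$ is again a morphism of the required type, so $\Theta'\approx\Theta$ via $-r$; and transitivity follows from the direct computation $\psi_{r'}\circ\psi_r=\psi_{r+r'}$, whence $\Theta\approx\Theta'$ and $\Theta'\approx\Theta''$ give $\Theta\approx\Theta''$ via $r+r'$.

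Next I would pin down the map in \equref{thprinc}. Fix once and for all a $k$-linear section $s_0:A\to E$ of $\pi$ and let $\varphi_0:A\times V\to E$, $\varphi_0(a,x):=s_0(a)+x$, be the resulting linear isomorphism, so that $\pi\circ\varphi_0=\pi_A$ and $\varphi_0(0,x)=x$. For a crossed system $\Theta$, transport the crossed product multiplication \equref{hoproduct2} along $\varphi_0$ to a JJ algebra structure $\cdot_E^{\Theta}$ on $E$ (this is the structure denoted $A\star_{\Theta}V$); since $\pi=\pi_A\circ\varphi_0^{-1}$ and $\pi_A$ is a JJ map, $\pi:(E,\cdot_E^{\Theta})\to A$ is a JJ map, so $(E,\cdot_E^{\Theta})$ represents a class in ${\rm Gext}(A,E)$, which we declare to be the image of $\overline{\Theta}$. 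To see that this is well defined, suppose $\Theta\approx\Theta'$ and take the isomorphism $\psi_r:A\#V\to A\#'V$ furnished above; then $\varphi_0\circ\psi_r\circ\varphi_0^{-1}:(E,\cdot_E^{\Theta})\to(E,\cdot_E^{\Theta'})$ is a JJ isomorphism, and because $\varphi_0$ fixes $V$ and intertwines $\pi_A$ with $\pi$, it again stabilizes $V$ and co-stabilizes $A$. Hence $(E,\cdot_E^{\Theta})\approx(E,\cdot_E^{\Theta'})$, and \equref{thprinc} is well defined on equivalence classes.

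Finally I would prove bijectivity. Injectivity is the converse of the previous step: if the images of $\overline{\Theta}$ and $\overline{\Theta'}$ agree in ${\rm Gext}(A,E)$, there is a JJ isomorphism $(E,\cdot_E^{\Theta})\to(E,\cdot_E^{\Theta'})$ that stabilizes $V$ and co-stabilizes $A$; conjugating back by $\varphi_0$ yields a JJ morphism $A\#V\to A\#'V$ of the same type, which by \leref{HHH} must equal $\psi_r$ for some $r$ satisfying (CH1)--(CH3), i.e. $\Theta\approx\Theta'$. Surjectivity is exactly \prref{hocechiv}: any class in ${\rm Gext}(A,E)$ admits a representative $(E,\cdot_E)$, and \prref{hocechiv} produces a crossed system $\Theta$ together with a JJ isomorphism $A\#V\to(E,\cdot_E)$ stabilizing $V$ and co-stabilizing $A$; comparing this with the fixed $\varphi_0$ shows $(E,\cdot_E)\approx(E,\cdot_E^{\Theta})$, so the given class is the image of $\overline{\Theta}$.

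I expect the main obstacle to be the bookkeeping around the section: the map \equref{thprinc} must be independent of the auxiliary choices, so one has to check carefully that transport along the fixed $\varphi_0$ carries the ``stabilizes $V$ / co-stabilizes $A$'' properties faithfully between morphisms of crossed products and morphisms of structures on $E$ (and, in the surjectivity step, that the section used in \prref{hocechiv} may be reconciled with $s_0$ up to $\approx$). Once this is settled, the real content is simply the term-by-term identification of \deref{echiaa} with (CH1)--(CH3), after which \leref{HHH} supplies the equivalence relation and the injectivity, and \prref{hocechiv} supplies the surjectivity.
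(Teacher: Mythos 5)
Your proposal is correct and follows exactly the route the paper intends: the paper's own proof is the one-line citation of \prref{hocprod}, \prref{hocechiv} and \leref{HHH}, and your argument is precisely the detailed unwinding of that citation (the identification of \deref{echiaa} with (CH1)--(CH3), the equivalence relation via $\psi_0$, $\psi_{-r}$ and $\psi_{r'}\circ\psi_r=\psi_{r+r'}$, injectivity from \leref{HHH}, surjectivity from \prref{hocechiv}). Your care with the fixed section $\varphi_0$ and the transport of the stabilizing/co-stabilizing properties is exactly the bookkeeping the paper leaves implicit.
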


\begin{proof} Follows from \prref{hocprod}, \prref{hocechiv} and \leref{HHH}.
\end{proof}

However, to compute the classifying object ${\mathbb G} {\mathbb
H}^{2} \, (A, \, V)$, for a given JJ algebra $A$ and a given
vector space $V$ remains a challenge. We will describe below a
strategy for tackling this computational problem inspired by the
way we defined the equivalence relation $\approx$ in
\deref{echiaa}. Indeed, the aforementioned definition shows that
two different JJ algebra structures $\cdot_V$ and $\cdot_V^{'}$ on
$V$ give rise to two different equivalence classes with respect to
the relation $\approx$ on ${\mathcal C} {\mathcal S} (A, V)$. This
observation will be our starting point. Let us fix $\cdot_{V}$ a
JJ algebra multiplication on $V$ and denote by ${\mathcal C}
{\mathcal S}_{\cdot_{V}} (A, \, V)$ the set of pairs
$\bigl(\triangleright, \, \vartheta \bigl)$ such that $\bigl(
\triangleright, \, \vartheta, \, \cdot_V \bigl) \in {\mathcal C}
{\mathcal S} (A, V)$. Two such pairs $\bigl(\triangleright, \,
\vartheta \bigl)$ and $\bigl(\triangleright', \, \vartheta' \bigl)
\in {\mathcal H} {\mathcal S}_{\cdot_{V}} (A, \, V)$ will be
called $\cdot_V$-cohomologous and will be denoted by
$\bigl(\triangleright, \, \vartheta \bigl) \, \approx_{\cdot_V}
\bigl(\triangleright', \, \vartheta' \bigl)$ if there exists a
linear map $r: A \to V$ such that the compatibility conditions
\equref{compa2}-\equref{compa3} are fulfilled for $\cdot_V' =
\cdot_V$. Then $\approx_{\cdot_V}$ is an equivalence relation on
${\mathcal C} {\mathcal S}_{\cdot_{V}} (A, \, V)$ and we denote by
${\mathbb H}^{2} \, \bigl(A, \, (V, \, \cdot_{V} )\bigl)$ the
quotient set ${\mathcal C} {\mathcal S}_{\cdot_{V}} (A, \, V)/
\approx_{\cdot_V}$. This non-abelian cohomological object
${\mathbb H}^{2} \, \bigl(A, \, (V, \, \cdot_{V} )\bigl)$
classifies all extensions of the given JJ algebra $A$ by the given
JJ algebra $(V, \, \cdot_{V})$ and gives the theoretical answer to
the classical extension problem for JJ algebras in the general
non-abelian case. The above discussion is summarized in the
following result:

\begin{corollary} \colabel{schclas}
Let $(A, \cdot)$ and $(V, \cdot_V)$ be two given JJ algebras.
Then, the map
\begin{equation} \eqlabel{clasextprob}
{\mathbb H}^{2} \, \bigl(A, \, (V, \, \cdot_{V} )\bigl) \to {\rm
Ext} \, (A, \, (V, \, \cdot_V) ), \qquad
\overline{\overline{(\triangleright, \, \vartheta)}} \,
\longmapsto \, A \star_{(\triangleright, \, \vartheta, \,
\cdot_V)} \, V
\end{equation}
is bijective, where ${\rm Ext} \, (A, \, (V, \, \cdot_V) )$ is the
set of equivalence classes of all JJ algebras that are extensions
of the JJ algebra $A$ by $(V, \cdot_V)$ and
$\overline{\overline{(\triangleright, \, \vartheta)}}$ denotes the
equivalence class of $(\triangleright, \, \vartheta)$ via
$\approx_{\cdot_V}$.
\end{corollary}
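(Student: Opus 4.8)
The plan is to deduce this directly from \thref{main1222} by restricting the bijection established there to the ``fibre'' cut out by a fixed multiplication $\cdot_V$. The starting observation is that a crossed system carries $\cdot_V$ as one of its three components, and by (J1) this component is necessarily a JJ algebra structure on $V$; hence ${\mathcal C}{\mathcal S}(A,V)$ is the disjoint union, taken over all JJ algebra structures $\cdot_V$ on $V$, of the subsets ${\mathcal C}{\mathcal S}_{\cdot_V}(A,V)$. I would first record that the relation $\approx$ of \deref{echiaa} never identifies two crossed systems lying in different pieces of this partition, since it explicitly requires $\cdot_V = \cdot_V'$; consequently $\approx$ restricts on each piece ${\mathcal C}{\mathcal S}_{\cdot_V}(A,V)$ to exactly $\approx_{\cdot_V}$, and ${\mathbb H}^2(A,(V,\cdot_V))$ is precisely the subset of ${\mathbb G}{\mathbb H}^2(A,V)$ consisting of the classes of those systems whose third component is $\cdot_V$.

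Next I would match the target sides. Reading off \equref{hoproduct2} one has $(0,x)\bullet(0,y) = (0, x\cdot_V y)$, so the ideal $\{0\}\times V$ of the crossed product $A\star_{(\triangleright,\vartheta,\cdot_V)} V$ inherits exactly the multiplication $\cdot_V$; thus every crossed product built from a system with third component $\cdot_V$ is an extension of $A$ by the JJ algebra $(V,\cdot_V)$ in the sense of \equref{extenho1}. Conversely, given any extension of $A$ by $(V,\cdot_V)$, \prref{hocechiv} supplies an isomorphism, stabilizing $V$ and co-stabilizing $A$, with a crossed product whose associated crossed system has third component equal to the restriction of the multiplication to the kernel, namely $\cdot_V$; hence that system lies in ${\mathcal C}{\mathcal S}_{\cdot_V}(A,V)$. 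The objects of ${\rm Ext}(A,(V,\cdot_V))$ are therefore precisely the crossed products arising from ${\mathcal C}{\mathcal S}_{\cdot_V}(A,V)$.

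It then remains to see that the two equivalences correspond. Two extensions are identified in ${\rm Ext}(A,(V,\cdot_V))$ exactly when there is a JJ algebra isomorphism between them that stabilizes $V$ and co-stabilizes $A$; by \leref{HHH} such morphisms between crossed products are precisely the maps $\psi_r$ attached to linear maps $r:A\to V$, and each $\psi_r$ restricts to the identity on $V$ (because $r(0)=0$), so it automatically preserves $\cdot_V$ and forces (CH1). Thus an equivalence of extensions translates into the relations (CH2)--(CH3) with $\cdot_V' = \cdot_V$, which is exactly the defining condition of $\approx_{\cdot_V}$. Assembling these identifications, the bijection of \thref{main1222} restricts to a bijection from ${\mathbb H}^2(A,(V,\cdot_V))$ onto ${\rm Ext}(A,(V,\cdot_V))$, which is the assertion.

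The only point needing care --- rather than a genuine obstacle --- is the bookkeeping linking ``extension by the fixed JJ algebra $(V,\cdot_V)$'' to ``crossed system whose third entry is $\cdot_V$'', both at the level of objects and at the level of equivalences. Once one checks that every stabilizing and co-stabilizing isomorphism restricts to the identity on $V$ and hence respects $\cdot_V$, the statement is a transparent restriction of the already-proven global bijection.
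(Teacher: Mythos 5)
Your proposal is correct and follows essentially the same route as the paper: the corollary is presented there as a summary of the preceding discussion, which likewise fixes $\cdot_V$, observes that the relation $\approx$ of \deref{echiaa} never identifies crossed systems with different third components, and restricts the global bijection of \thref{main1222} to the fibre ${\mathcal C}{\mathcal S}_{\cdot_V}(A,V)/\approx_{\cdot_V}$. Your additional checks (via \prref{hocechiv} and \leref{HHH}) that the object-level and equivalence-level identifications match are exactly the bookkeeping the paper leaves implicit.
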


Furthermore, we obtained the following decomposition of ${\mathbb
G} {\mathbb H}^{2} \, (A, \, V)$:

\begin{corollary} \colabel{desccompcon}
Let $A$ be a JJ algebra, $E$ a vector space and $\pi : E \to A$ an
epimorphism of vector spaces with $V = {\rm Ker} (\pi)$. Then
\begin{equation}\eqlabel{balsoi}
{\mathbb G} {\mathbb H}^{2} \, (A, \, V) = \, \sqcup_{\cdot_{V}}
\, {\mathbb H}^{2} \, \bigl(A, \, (V, \, \cdot_{V} )\bigl)
\end{equation}
where the coproduct on the right hand side is in the category of
sets over all possible JJ algebra structures $\cdot_{V}$ on the
vector space $V$.
\end{corollary}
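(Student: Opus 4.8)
The plan is to derive the decomposition directly from the way the equivalence relation $\approx$ was set up in \deref{echiaa}, without any fresh computation. First I would record the relevant partition of the ambient set. Every crossed system $(\triangleright, \, \vartheta, \, \cdot_V) \in {\mathcal C}{\mathcal S} (A, \, V)$ carries a distinguished third entry $\cdot_V$, which by axiom (J1) of \prref{hocprod} is forced to be a JJ algebra multiplication on $V$. Sorting crossed systems according to this entry therefore yields
$$
{\mathcal C}{\mathcal S} (A, \, V) \, = \, \sqcup_{\cdot_V} \, {\mathcal C}{\mathcal S}_{\cdot_V} (A, \, V),
$$
the disjoint union running over all JJ algebra structures $\cdot_V$ on $V$, since each crossed system belongs to exactly one block ${\mathcal C}{\mathcal S}_{\cdot_V} (A, \, V)$.

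The decisive observation is that $\approx$ respects this partition. Indeed, \deref{echiaa} requires as its very first condition that two cohomologous crossed systems satisfy $\cdot_V = \cdot_V '$; hence the multiplication $\cdot_V$ is an invariant of the $\approx$-class, and no element of one block can be equivalent to an element of another. Consequently each $\approx$-equivalence class is contained entirely within a single block ${\mathcal C}{\mathcal S}_{\cdot_V} (A, \, V)$. Restricting $\approx$ to such a block and freezing $\cdot_V ' = \cdot_V$ in \equref{compa2}--\equref{compa3} reduces the defining conditions of \deref{echiaa} to exactly those describing $\approx_{\cdot_V}$; that is, the restriction of $\approx$ to ${\mathcal C}{\mathcal S}_{\cdot_V} (A, \, V)$ coincides with $\approx_{\cdot_V}$.

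Passing to quotients then finishes the argument, using the elementary fact that a partition of a set into blocks on which an equivalence relation never crosses between blocks induces the corresponding partition of the quotient:
$$
{\mathbb G}{\mathbb H}^{2} (A, \, V) \, = \, {\mathcal C}{\mathcal S} (A, \, V)/\!\approx \, = \, \sqcup_{\cdot_V} \bigl( {\mathcal C}{\mathcal S}_{\cdot_V} (A, \, V)/\!\approx_{\cdot_V} \bigr) \, = \, \sqcup_{\cdot_V} {\mathbb H}^{2} \bigl( A, \, (V, \, \cdot_V) \bigr),
$$
which is the asserted formula. I expect no genuine obstacle here: the entire content is the fact, already flagged in the discussion preceding the statement, that $\cdot_V$ is preserved by $\approx$. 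The one point that deserves a moment's care is verifying that $\approx_{\cdot_V}$ is the \emph{full} restriction of $\approx$ to a block, rather than some strictly finer relation; but this is transparent, since both relations are governed by the single pair of compatibilities \equref{compa2}--\equref{compa3} once $\cdot_V$ is held fixed.
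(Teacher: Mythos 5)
Your argument is correct and is essentially the paper's own: the paper derives \equref{balsoi} from the observation that the first condition of \deref{echiaa} forces $\cdot_V = \cdot_V'$, so $\approx$ never crosses between the blocks ${\mathcal C}{\mathcal S}_{\cdot_V}(A,V)$ and restricts on each block to $\approx_{\cdot_V}$, after which one passes to quotients exactly as you do. No gaps.
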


As it can be easily seen, formula \equref{balsoi} still gives rise
to laborious computations starting with finding all JJ algebra
multiplications on $V$. Of course, the computations become more
complicated when the dimension of $V$ increases. We should point
out that among all components of the coproduct in \equref{balsoi}
the simplest one is that corresponding to the trivial JJ algebra
structure on $V$, i.e. $x \cdot_V y := 0$, for all $x$, $y \in V$
which we shall denote by $V_0 := (V, \, \cdot_V = 0)$. Indeed, let
${\mathcal C} {\mathcal S}_0 (A, \, V_0)$ be the set of all pairs
$\bigl(\triangleright, \, \vartheta \bigl)$ such that $\bigl(
\triangleright, \, \vartheta, \, \cdot_V := 0 \bigl) \in {\mathcal
C} {\mathcal S} (A, V)$. As shown in \exref{abelian}, a pair
$\bigl(\triangleright, \, \vartheta \bigl) \in {\mathcal C}
{\mathcal S}_0 (A, \, V_0)$ if and only if $(V, \,
\triangleright)$ is a JJ $A$-module and $\vartheta : A \times A
\to V$ is a $\triangleright$-cocycle. It turns out by applying
\deref{echiaa} for the trivial multiplication $\cdot := 0$  that
two pairs $\bigl(\triangleright, \, \vartheta \bigl)$ and
$\bigl(\triangleright', \, \vartheta' \bigl) \in {\mathcal C}
{\mathcal S}_0 (A, \, V_0)$ are $0$-cohomologous
$\bigl(\triangleright, \, \vartheta \bigl) \approx_0
\bigl(\triangleright', \, \vartheta' \bigl)$ if and only if
$\triangleright = \triangleright'$ and there exists a linear map
$r: A \to V$ such that for all $a$, $b\in A$ we have:
\begin{equation}\eqlabel{cazslab}
\vartheta(a, \, b) = \vartheta '(a, \, b) + a \triangleright  r(b)
+ b \triangleright r(a) - r(a\cdot b)
\end{equation}
The equality $\triangleright = \triangleright'$ shows that two
different JJ $A$-module structures on $V$ give different
equivalence classes in the classifying object ${\mathbb H}^{2} \,
\bigl(A, \, V_0 \bigl)$. Thus, we can apply the same strategy as
before for computing  ${\mathbb H}^{2} \, \bigl(A, \, V_0 \bigl)$:
we fix $(V, \, \triangleright)$ a JJ $A$-module structure on $V$
and consider the set ${\rm Z}^2_{\triangleright} \, (A, \, V_0) $
of all $\triangleright$-cocycles: i.e. symmetric bilinear maps
$\vartheta : A \times A \to V$ such that
\begin{eqnarray}
\sum_{(c)} \, \vartheta (a, \, b\cdot c) + \sum_{(c)} \, a
\triangleright \, \vartheta (b,\,c) =0 \eqlabel{cocycleb}
\end{eqnarray}
for all $a$, $b$, $c\in A$. Two $\triangleright$-cocycles
$\vartheta$ and $\vartheta'$ will be called cohomologous
$\vartheta \approx_0 \vartheta'$ if and only if there exists a
linear map $r: A \to V$ such that \equref{cazslab} holds. Then
$\approx_0$ is an equivalence relation on ${\rm
Z}^2_{\triangleright} \, (A, \, V_0) $ and the quotient set ${\rm
Z}^2_{\triangleright} \, (A, \, V_0)/ \approx_0$, which we will
denote by ${\rm H}^2_{\triangleright} \, (A, \, V_0)$, plays the
role of the second cohomological group from the theory of
Lie/associative algebras \cite{CE, Hoch2}. We can now put the
above observations together and state the following results which
classifies all extensions of a JJ algebra $A$ by an abelian
algebra $V_0$. It is the JJ algebras counterpart of Hochschild's
result \cite[Theorem 6.2]{Hoch2} for associative algebra and
respectively of Chevalley and Eilenberg's results for Lie algebras
\cite[Theorem 26.2]{CE} concerning the classification of
extensions with an 'abelian' kernel.

\begin{corollary}\colabel{cazuabspargere}
Let $A$ be a JJ algebra and $V$ a vector space viewed with the
trivial JJ algebra structure $V_0$. Then:
\begin{equation}\eqlabel{balsoi2}
{\mathbb H}^{2} \, \bigl(A, \, V_0 \bigl) \, = \,
\sqcup_{\triangleright} \, {\rm H}^2_{\triangleright} \, (A, \,
V_0)
\end{equation}
where the coproduct on the right hand side is in the category of
sets over all possible JJ $A$-module structures $\triangleright$
on the vector space $V$.
\end{corollary}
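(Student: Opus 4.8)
The plan is to exhibit \equref{balsoi2} as a fibration of a quotient set over the set of all JJ $A$-module structures on $V$, mimicking the partition argument that yields \coref{desccompcon} but carried out one level further down. First I would recall from \exref{abelian} that, since $\cdot_V$ is now fixed to be the trivial multiplication, a pair $(\triangleright, \, \vartheta)$ lies in ${\mathcal C}{\mathcal S}_0 \, (A, \, V_0)$ precisely when $(V, \, \triangleright)$ is a JJ $A$-module and $\vartheta$ is a $\triangleright$-cocycle, i.e. a symmetric bilinear map satisfying \equref{cocycleb}. Consequently the underlying set splits as the disjoint union
\[
{\mathcal C}{\mathcal S}_0 \, (A, \, V_0) \, = \, \sqcup_{\triangleright} \, {\rm Z}^2_{\triangleright} \, (A, \, V_0),
\]
indexed by the JJ $A$-module structures $\triangleright$ on $V$: the first component of each pair is itself such a structure, and for each fixed $\triangleright$ the admissible second components are exactly the $\triangleright$-cocycles.

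The crucial observation is that the equivalence relation $\approx_0$ never mixes distinct fibres. By the specialisation of \deref{echiaa} to $\cdot_V := 0$ recorded just above the statement, $(\triangleright, \, \vartheta) \approx_0 (\triangleright', \, \vartheta')$ forces $\triangleright = \triangleright'$; hence the first projection $(\triangleright, \, \vartheta) \mapsto \triangleright$ is constant on every $\approx_0$-class and descends to a well-defined surjection from ${\mathbb H}^2 \, (A, \, V_0)$ onto the set of JJ $A$-module structures on $V$. I would then identify the fibre over a fixed $\triangleright$: restricting $\approx_0$ to ${\rm Z}^2_{\triangleright} \, (A, \, V_0)$ leaves only the requirement that there exist a linear map $r : A \to V$ for which \equref{cazslab} holds, which is exactly the relation defining cohomologous $\triangleright$-cocycles, whose quotient is by definition ${\rm H}^2_{\triangleright} \, (A, \, V_0)$. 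Reassembling the fibres then yields \equref{balsoi2}, and since the index set is literally the set of module structures, the decomposition is a coproduct in the category of sets, as claimed.

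I do not expect a genuine obstacle, as the result is pure bookkeeping on a partitioned set once the key observation $\triangleright = \triangleright'$ is in hand. The only points deserving a word of care are that $\approx_0$ is indeed an equivalence relation on each ${\rm Z}^2_{\triangleright} \, (A, \, V_0)$ --- reflexivity via $r = 0$, symmetry by replacing $r$ with $-r$, and transitivity by adding the corresponding maps, all of which remain within the same fibre because $\triangleright$ is preserved --- and that this relation does not lead out of ${\rm Z}^2_{\triangleright} \, (A, \, V_0)$, which is automatic since both pairs are assumed from the outset to lie in ${\mathcal C}{\mathcal S}_0 \, (A, \, V_0)$ and therefore to have cocycle second components. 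With these verifications the proof reduces to the two displayed identifications above.
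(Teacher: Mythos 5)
Your argument is correct and coincides with the paper's own (implicit) proof, namely the discussion immediately preceding the corollary: the specialisation of \deref{echiaa} to $\cdot_V := 0$ forces $\triangleright = \triangleright'$, so $\approx_0$ respects the partition of ${\mathcal C}{\mathcal S}_0 \, (A, \, V_0)$ by module structures, and each fibre quotient is by definition ${\rm H}^2_{\triangleright} \, (A, \, V_0)$. Nothing further is needed.
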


\section{Applications and Examples} \selabel{aplicatii}
Throughout this section $k$ will be a field of characteristic
$\neq 2, \ 3$. In accordance to our notational conventions we
denote by $k_{0}$ the $1$-dimensional JJ algebra with trivial
multiplication and by $V_0$ the abelian JJ algebra structure on an
arbitrary vector space $V$. In the sequel, the theoretical results
obtained in \seref{crossed} will be applied for two main classes
of JJ algebras: co-flag algebras and metabelian algebras.

\subsection*{Co-flag algebras.}
For a given JJ algebra $A$ we shall classify all JJ algebras $B$
such that there exists a surjective JJ algebra map $\pi : B \to A$
having a $1$-dimensional kernel, which as a vector space will be
assumed to be $k$. We shall compute ${\mathbb G} {\mathbb H}^{2}
\, (A, \, k)$ which classifies all these JJ algebras up to an
isomorphism which stabilizes $k$ and co-stabilizes $A$ as well as
the second classifying object, denoted by ${\mathbb C} {\mathbb P}
\, (A, \, k)$, which provides the classification only up to an
isomorphism of JJ algebras. By computing these two classifying
objects we take the first step towards describing and classifying
a new class of algebras defined as follows:

\begin{definition} \delabel{coflaglbz}
Let $A$ be a JJ algebra and $E$ a vector space. A JJ algebra
structure $\cdot_E$ on $E$ is called a \emph{co-flag JJ algebra
over $A$} if there exists a positive integer $n$ and a finite
chain of surjective morphisms of JJ algebras
\begin{equation} \eqlabel{lant}
A_n : = (E, \cdot_E) \stackrel{\pi_{n}}{\longrightarrow} A_{n-1}
\stackrel{\pi_{n-1}}{\longrightarrow} A_{n-2} \, \cdots \,
\stackrel{\pi_{2}}{\longrightarrow} A_1 \stackrel{\pi_{1}}
{\longrightarrow} A_{0} := A
\end{equation}
such that ${\rm dim}_k ( {\rm Ker} (\pi_{i}) ) = 1$, for all $i =
1, \cdots, n$. A finite dimensional JJ algebra is called a
\emph{co-flag algebra} if it is a co-flag JJ algebra over the JJ
algebra $k_{0}$.
\end{definition}

In order to describe co-flag algebras one more piece of terminology is nedeed:

\begin{definition} \delabel{coflag}
Let $A$ be a JJ algebra. A \emph{co-flag datum of $A$} is a pair
$(\lambda, \vartheta)$ consisting of a linear map $\lambda: A \to
k$ and a symmetric bilinear map $\vartheta : A \times A \to k$
satisfying the following compatibilities for any $a$, $b$, $c\in
A$:
\begin{eqnarray}
&&\lambda(a\cdot b) = -2 \lambda(a) \lambda(b)\eqlabel{ciudat0}\\
&&\sum_{(c)} \, \vartheta (a, \, b\cdot c) + \, \sum_{(c)}
\lambda(a) \vartheta (b, \, c) = 0 \eqlabel{ciudat1}
\end{eqnarray}
\end{definition}

We denote by ${\mathcal C} {\mathcal F} \, (A)$ the set of all
co-flag data of $A$.  As we shall see, ${\mathcal C} {\mathcal F}
\, (A)$ parameterizes the set of all crossed systems of $A$ by a
$1$-dimensional vector space. Our next result provides a
description of the first algebra $A_1$ in the exact sequence
\equref{lant} in terms depending only on $A$.

\begin{proposition}\prlabel{cohocflag}
Let $A$ be a JJ algebra. Then there exists a bijection ${\mathcal
C}{\mathcal S} \, (A, \, k) \cong {\mathcal C} {\mathcal F} \,
(A)$ between the set of all crossed systems of $A$ by $k$ and the
set of all co-flag data of $A$ given such that the crossed product
$A \# k$ associated to $(\lambda, \vartheta) \in {\mathcal C}
{\mathcal F} \, (A)$ is the JJ algebra denoted by $A_{(\lambda,
\vartheta)}$ with the multiplication given for any $a$, $b \in A$,
$x$, $y\in k$ by:
\begin{equation} \eqlabel{patratnul}
(a, x) \bullet (b, y) = \bigl(a\cdot b, \,\, \vartheta (a, b) +
\lambda (a) y + \lambda (b) x \bigl)
\end{equation}
\end{proposition}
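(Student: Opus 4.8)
The plan is to specialize the general crossed product machinery of \prref{hocprod} to the case $V = k$, where every bilinear map degenerates to a simpler gadget. First I would observe that, since $k$ is $1$-dimensional, the action $\triangleright : A \times k \to k$ is completely determined by the linear map $\lambda : A \to k$, $\lambda(a) := a \triangleright 1$, via $a \triangleright x = \lambda(a) \, x$; similarly $\cdot_V : k \times k \to k$ is multiplication by a scalar $\mu \in k$, while $\vartheta : A \times A \to k$ remains a genuine bilinear form. Thus a crossed data of $A$ by $k$ is nothing but a triple $(\lambda, \, \vartheta, \, \cdot_V)$, and the real content is to show that the axioms (J1)-(J4) of \prref{hocprod} collapse exactly onto the two conditions \equref{ciudat0}-\equref{ciudat1} defining a co-flag datum, together with the forced vanishing of $\cdot_V$.

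The key step is the analysis of (J1). A JJ algebra structure on $k$ is a scalar multiple $x \cdot_V y = \mu \, xy$ of the field multiplication; the Jacobi identity forces $\sum_{(c)} x \cdot_V (y \cdot_V z) = 3 \mu^2 \, xyz = 0$, and since ${\rm char}(k) \neq 3$ this gives $\mu = 0$. Hence (J1) amounts precisely to $\cdot_V = 0$ together with $\vartheta$ symmetric. Once the trivial multiplication is imposed, I would feed it into the remaining axioms: (J2) reads $\lambda(a \cdot b)\, x + 2 \lambda(a) \lambda(b)\, x = 0$ for all $x \in k$, which is exactly \equref{ciudat0}; (J3) holds vacuously because every term carries a factor $\cdot_V = 0$; and (J4), upon rewriting $a \triangleright \vartheta(b, \, c) = \lambda(a) \vartheta(b, \, c)$, is literally \equref{ciudat1}. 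This establishes that $(\triangleright, \, \vartheta, \, \cdot_V) \in {\mathcal C}{\mathcal S}(A, \, k)$ if and only if $\cdot_V = 0$ and $(\lambda, \, \vartheta) \in {\mathcal C}{\mathcal F}(A)$, giving the desired bijection $(\triangleright, \, \vartheta, \, \cdot_V) \mapsto (\lambda, \, \vartheta)$.

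Finally, I would record that under this identification the crossed product multiplication \equref{hoproduct2} specializes to the claimed formula: substituting $a \triangleright y = \lambda(a)\, y$, $b \triangleright x = \lambda(b)\, x$ and $x \cdot_V y = 0$ into \equref{hoproduct2} yields $(a, x) \bullet (b, y) = \bigl(a \cdot b, \, \vartheta(a, b) + \lambda(a)\, y + \lambda(b)\, x \bigl)$, which is exactly \equref{patratnul}. The main obstacle is the first step, namely recognizing that the only JJ algebra structure on a $1$-dimensional space is the trivial one; this rests essentially on ${\rm char}(k) \neq 3$ and is precisely what makes $\cdot_V = 0$ the unique surviving multiplication, so that the entire classifying datum reduces to the pair $(\lambda, \, \vartheta)$.
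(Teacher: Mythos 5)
Your proposal is correct and follows essentially the same route as the paper: reduce the action to $\lambda(a) := a \triangleright 1$, note that ${\rm char}(k) \neq 3$ forces the trivial JJ multiplication on $k$, and then check that (J1)--(J4) collapse to the symmetry of $\vartheta$ together with \equref{ciudat0} and \equref{ciudat1}. The only difference is that you spell out the computation $3\mu^2 = 0$ forcing $\cdot_V = 0$, a point the paper asserts without detail (and justifies elsewhere, in the proof of \prref{dim1der}).
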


\begin{proof}
The proof basically comes down to computing all crossed systems
between $A$ and $k_{0}$, i.e. all bilinear maps $\triangleright :
A \times k \to k$, $ \vartheta : A\times A \to k$ satisfying the
compatibility conditions (J1)-(J4) of \prref{hocprod} with
$\cdot_{k}$ being the trivial multiplication. As $k$ has dimension
$1$ there exists a bijection between the set of all crossed datums
$\bigl( \triangleright, \, \vartheta, \, \cdot_k \bigl)$ of $A$ by
$k$ and the set of pairs $(\lambda, \, \vartheta)$ consisting of a
linear map $\lambda: A \to k$ and a bilinear map $\vartheta: A
\times A \to k$. The bijection is given such that the crossed
datum $( \triangleright, \, \vartheta)$ corresponding to
$(\lambda, \, \vartheta)$ is defined by $a\triangleright x :=
\lambda (a) x$ for all $a \in A$ and $x \in k$. Now it can easily
be seen that (J1) implies $\vartheta$ symmetric while (J3) is
trivially fulfilled. Finally, (J2) amounts to the compatibility
\equref{ciudat0} while \equref{ciudat1} is derived from (J4). The
algebra $A_{(\lambda, \vartheta)}$ is just the crossed product $A
\# k$ associated to this context.
\end{proof}

We will now describe the algebra $A_{(\lambda, \vartheta)}$ in
terms of generators and relations. We will see the elements of $A$
as elements in $A \times k$ through the identification $a = (a, \,
0)$ and denote by $f := (0_A, \, 1) \in A\times k$. Consider $\{
e_i \, | \, i \in I \}$ to be a basis of $A$ as a vector space
over $k$. Then the algebra $A_{(\lambda, \vartheta)}$ has $\{ f,
\, e_i \, | \, i \in I \}$ as a basis for the underlying vector
space while its multiplication $\bullet$ is given for any $i\in I$
by:
\begin{equation}\eqlabel{primaalg}
e_i \bullet e_j = e_i \cdot_A e_j + \vartheta(e_i, \, e_j) \, f,
\,\,\, f^2 = 0, \,\,\, e_i \bullet f = f \bullet e_i =
\lambda(e_i)\,f
\end{equation}
where $\cdot_A$ denotes the multiplication on $A$. Now putting
\prref{cohocflag} and \prref{hocechiv} together yields:

\begin{corollary}\colabel{descefecti}
Let $A$ be a JJ algebra. A JJ algebra $B$ has a surjective JJ
algebra map $B \to A\to 0$ whose kernel is $1$-dimensional if and
only if $B$ is isomorphic to $A_{(\lambda, \vartheta)}$, for some
$(\lambda, \vartheta) \in {\mathcal C} {\mathcal F} \, (A)$.
\end{corollary}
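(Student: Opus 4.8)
The plan is to read off the statement as a direct consequence of the two structural results already in hand: \prref{hocechiv}, which realizes every JJ algebra admitting a surjection onto $A$ as a crossed product $A \# V$ with $V = \Ker(\pi)$, and \prref{cohocflag}, which identifies crossed products of $A$ by $k$ with co-flag data. I would establish the two implications separately.

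For the forward implication, suppose $B = (B, \cdot_B)$ admits a surjective JJ algebra map $\pi : B \to A$ with one-dimensional kernel $V = \Ker(\pi)$. Applying \prref{hocechiv} with $E = B$ gives an isomorphism of JJ algebras $(B, \cdot_B) \cong A \# V$ for some crossed system $(\triangleright, \vartheta, \cdot_V) \in {\mathcal C}{\mathcal S}(A, V)$. Because $\dim_k V = 1$, I would fix a linear isomorphism $V \cong k$ and transport the crossed system along it, obtaining a crossed system of $A$ by $k$ and an isomorphism $B \cong A \# k$. By \prref{cohocflag} this crossed product is $A_{(\lambda, \vartheta)}$ for the co-flag datum $(\lambda, \vartheta) \in {\mathcal C}{\mathcal F}(A)$ matched to it under the bijection ${\mathcal C}{\mathcal S}(A, k) \cong {\mathcal C}{\mathcal F}(A)$, so $B \cong A_{(\lambda, \vartheta)}$ as required.

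For the converse, I would use the remark recorded just before \prref{hocechiv}: the canonical projection $\pi_A : A \# k \to A$, $(a, x) \mapsto a$, is always a surjective morphism of JJ algebras with kernel $\{0\} \times k$, which is one-dimensional. Since $A_{(\lambda, \vartheta)}$ is exactly this crossed product $A \# k$ by \prref{cohocflag}, any isomorphism $B \cong A_{(\lambda, \vartheta)}$ composes with $\pi_A$ to furnish a surjective JJ algebra map $B \to A$ with one-dimensional kernel.

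The whole argument is bookkeeping once the two propositions are invoked, so I anticipate no serious obstacle. The only point deserving care is the passage from an abstract one-dimensional kernel $V$ to the model $k$ in the forward direction: one must check that the transported data satisfies the hypotheses of \prref{cohocflag}, i.e. that $\cdot_V$ is the trivial multiplication. This is automatic, since over a field of characteristic $\neq 2, 3$ every one-dimensional JJ algebra has trivial multiplication (the Jacobi identity forces $3\lambda^2 = 0$ for $v \cdot v = \lambda v$), which is precisely the setting in which \prref{cohocflag} was proved.
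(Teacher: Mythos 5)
Your argument is correct and is essentially the paper's own: the corollary is stated there as an immediate consequence of combining \prref{hocechiv} (every such $B$ is a crossed product $A \# V$ with $V = {\rm Ker}(\pi)$) with \prref{cohocflag} (crossed systems of $A$ by $k$ correspond bijectively to co-flag data). Your explicit check that the one-dimensional kernel carries only the trivial JJ multiplication in characteristic $\neq 3$ is a point the paper leaves implicit in the proof of \prref{cohocflag}, and it is verified correctly.
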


We are now in a position to compute the classifying object ${\mathbb G}
{\mathbb H}^{2} \, (A, \, k)$.

\begin{proposition} \prlabel{hoccal1}
Let $A$ be a JJ algebra. Then,
$$
{\mathbb G} {\mathbb H}^{2} \, (A, \, k) \cong {\mathcal C}
{\mathcal F} \, (A)/ \approx
$$
where $\approx$ is the following equivalence relation on
${\mathcal C} {\mathcal F} \, (A)$: $(\lambda, \vartheta) \approx
(\lambda', \vartheta')$ if and only if $\lambda = \lambda'$ and
there exists a linear map $t: A \to k$ such that for any $a$,
$b\in A$:
\begin{equation} \eqlabel{primaechiv}
\vartheta (a, b) = \vartheta'(a, b) + \lambda(a) t(b) + \lambda(b)
t(a) - t(a\cdot b)
\end{equation}
\end{proposition}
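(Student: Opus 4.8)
The plan is to combine the general answer to the GE-problem from \thref{main1222} with the explicit correspondence between crossed systems and co-flag data from \prref{cohocflag}, and then simply transport the equivalence relation $\approx$ of \deref{echiaa} across that bijection. First I would invoke \thref{main1222} with $V = k$ to write ${\mathbb G}{\mathbb H}^{2}(A, k) = {\mathcal C}{\mathcal S}(A, k)/\approx$, where $\approx$ is the relation of \deref{echiaa}. By \prref{cohocflag} the sets ${\mathcal C}{\mathcal S}(A, k)$ and ${\mathcal C}{\mathcal F}(A)$ are in bijection, the crossed system $(\triangleright, \vartheta, \cdot_k)$ attached to a co-flag datum $(\lambda, \vartheta)$ being given by $a \triangleright x = \lambda(a)\, x$ together with $\cdot_k = 0$, the latter being the only JJ algebra multiplication on the one-dimensional space $k$ (as already used in \prref{cohocflag}). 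It therefore suffices to pull the relation $\approx$ back along this bijection and check that it becomes precisely the relation described in the statement.

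The key, and entirely routine, step is the specialization of \equref{compa2} and \equref{compa3} to $V = k$. Since both crossed systems being compared carry the trivial multiplication $\cdot_V = \cdot_V' = 0$, the condition $\cdot_V = \cdot_V'$ of \deref{echiaa} is automatic and every product $\cdot_V'$ occurring on the right-hand sides vanishes. Writing $r = t : A \to k$ for the linear map, \equref{compa2} collapses to $a \triangleright x = a \triangleright' x$, that is $\lambda(a) = \lambda'(a)$ for all $a \in A$, which is exactly the condition $\lambda = \lambda'$. With this in hand, \equref{compa3} reduces to $\vartheta(a, b) = \vartheta'(a, b) + \lambda'(a)\, t(b) + \lambda'(b)\, t(a) - t(a\cdot b)$, and substituting $\lambda' = \lambda$ yields precisely \equref{primaechiv}.

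Finally I would observe that, being the pullback of an equivalence relation along a bijection, the relation $\approx$ on ${\mathcal C}{\mathcal F}(A)$ defined by \equref{primaechiv} is automatically an equivalence relation, and the bijection of \prref{cohocflag} descends to a bijection ${\mathcal C}{\mathcal S}(A, k)/\approx \;\cong\; {\mathcal C}{\mathcal F}(A)/\approx$; composing with \thref{main1222} gives the asserted isomorphism ${\mathbb G}{\mathbb H}^{2}(A, k) \cong {\mathcal C}{\mathcal F}(A)/\approx$. I do not expect any genuine obstacle: the whole content is the bookkeeping of how the action $\triangleright$ encodes the linear map $\lambda$ and how the triviality of $\cdot_k$ annihilates the quadratic term $r(a)\cdot_V' r(b)$. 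The only point that deserves to be stated explicitly is exactly this vanishing, which is what turns the general cocycle condition into the simpler coboundary-type formula \equref{primaechiv}.
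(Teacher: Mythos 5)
Your proposal is correct and follows essentially the same route as the paper: invoke \thref{main1222} together with the bijection ${\mathcal C}{\mathcal S}(A,k)\cong{\mathcal C}{\mathcal F}(A)$ of \prref{cohocflag}, then transport the relation of \deref{echiaa} and observe that the vanishing of $\cdot_k$ kills the term $r(a)\cdot_V' r(b)$ and forces $\lambda=\lambda'$, yielding \equref{primaechiv}. You merely spell out the specialization of \equref{compa2}--\equref{compa3} that the paper leaves implicit.
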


\begin{proof}
We obtain from \thref{main1222} and \prref{cohocflag} that
$$
{\mathbb G} {\mathbb H}^{2} \, (A, \, k) \cong {\mathcal C}
{\mathcal F} \, (A)/ \approx
$$
where the equivalence relation $\approx$ on ${\mathcal C}
{\mathcal F} \, (A)$ is just the equivalence relation $\approx$
from \deref{echiaa} written for the set ${\mathcal C} {\mathcal F}
\, (A)$ via the bijection ${\mathcal H}{\mathcal S} \, (A, \, k)
\cong {\mathcal C} {\mathcal F} \, (A)$ given in
\prref{cohocflag}. The proof is finished by noticing that the equivalence
relation $\approx$ written on the set of all co-flag data takes
precisely the desired form.
\end{proof}

The following decomposition of ${\mathcal C} {\mathcal F} \, (A)/
\approx$ suggested by  \prref{hoccal1}  will come in handy: for a
fixed linear map $\lambda: A \to k$ satisfying \equref{ciudat0} we
shall denote by ${\rm Z}^2_{\lambda} \, (A, \, k)$ the set of all
$\lambda$-cocycles; that is, the set of all symmetric bilinear
maps $\vartheta : A \times A \to k$ satisfying the compatibility
condition \equref{ciudat1}. Two $\lambda$-cocycles $\vartheta$,
$\vartheta' : A \times A \to k$ are equivalent $\vartheta
\approx^{\lambda} \vartheta'$ if and only if there exists a linear
map $t: A \to k$ such that \equref{primaechiv} holds. If we denote
${\rm H}^2_{\lambda} \, (A, \, k) := {\rm Z}^2_{\lambda} \, (A, \,
k)/ \approx^{\lambda}$ we obtain the following decomposition of
${\mathbb G} {\mathbb H}^{2} \, (A, \, k)$:

\begin{corollary} \colabel{hoccal1ab}
Let $A$ be a JJ algebra. Then,
\begin{equation} \eqlabel{astaedefol}
{\mathbb G} {\mathbb H}^{2} \, (A, \, k) \, \cong \,
\sqcup_{\lambda} \,\, {\rm H}^2_{\lambda} \, (A, \, k)
\end{equation}
where the coproduct on the right hand side is in the category of
sets over all possible linear maps $\lambda: A\to k$ satisfying
\equref{ciudat0}.
\end{corollary}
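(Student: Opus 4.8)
The plan is to establish the decomposition \equref{astaedefol} by the same fibering strategy used for \coref{desccompcon} and \coref{cazuabspargere}, specialized now to the concrete description of crossed systems afforded by \prref{cohocflag}. I start from the isomorphism ${\mathbb G} {\mathbb H}^{2} \, (A, \, k) \cong {\mathcal C} {\mathcal F} \, (A)/ \approx$ proved in \prref{hoccal1}, so the entire task reduces to understanding the equivalence relation $\approx$ on the set ${\mathcal C} {\mathcal F} \, (A)$ of co-flag data.

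The key observation driving the proof is that the relation $\approx$ from \prref{hoccal1} can only identify two co-flag data $(\lambda, \vartheta)$ and $(\lambda', \vartheta')$ when $\lambda = \lambda'$; this is exactly the first condition in the definition of $\approx$. Consequently, the map $(\lambda, \vartheta) \mapsto \lambda$ is constant on each equivalence class, and so it induces a well-defined partition of the quotient ${\mathcal C} {\mathcal F} \, (A)/ \approx$ indexed by the linear maps $\lambda : A \to k$ satisfying \equref{ciudat0}. First I would make this partition explicit: for each such fixed $\lambda$, the fiber over $\lambda$ consists precisely of the classes of pairs $(\lambda, \vartheta)$ where $\vartheta$ ranges over all symmetric bilinear forms satisfying \equref{ciudat1}, which is by definition the set ${\rm Z}^2_{\lambda} \, (A, \, k)$ of $\lambda$-cocycles.

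Next I would check that, restricted to a single fiber (i.e. with $\lambda' = \lambda$ held fixed), the relation $\approx$ descends exactly to the relation $\approx^{\lambda}$ on ${\rm Z}^2_{\lambda} \, (A, \, k)$: two $\lambda$-cocycles are related precisely when there is a linear map $t : A \to k$ realizing \equref{primaechiv}. This is immediate from unwinding \prref{hoccal1} once $\lambda$ is fixed, since in that case the condition $\lambda = \lambda'$ is automatic and the only remaining requirement is \equref{primaechiv}. It follows that the quotient of the fiber is exactly ${\rm H}^2_{\lambda} \, (A, \, k) = {\rm Z}^2_{\lambda} \, (A, \, k)/ \approx^{\lambda}$. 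Assembling these fiberwise quotients over all admissible $\lambda$ yields a set-theoretic coproduct, giving the claimed bijection \equref{astaedefol}.

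I do not expect any serious obstacle here: the content is essentially bookkeeping, verifying that the global equivalence relation respects the fibration by $\lambda$ and matches $\approx^{\lambda}$ on each fiber. The only point demanding a little care is to confirm that distinct values of $\lambda$ genuinely produce disjoint pieces of the quotient, so that the union is a true coproduct rather than merely a surjection; but this is guaranteed by the clause $\lambda = \lambda'$ built into \prref{hoccal1}, exactly as the analogous clauses $\cdot_V = \cdot_V'$ and $\triangleright = \triangleright'$ secured disjointness in \coref{desccompcon} and \coref{cazuabspargere} respectively.
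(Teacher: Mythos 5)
Your proposal is correct and follows essentially the same route as the paper: the paper derives \coref{hoccal1ab} directly from \prref{hoccal1} by observing that the clause $\lambda=\lambda'$ in the equivalence relation $\approx$ makes the assignment $(\lambda,\vartheta)\mapsto\lambda$ constant on classes, so that ${\mathcal C}{\mathcal F}(A)/\approx$ splits as the disjoint union over admissible $\lambda$ of the fiberwise quotients ${\rm Z}^2_{\lambda}(A,k)/\approx^{\lambda}={\rm H}^2_{\lambda}(A,k)$. Your bookkeeping of the fibration and of the disjointness is exactly the argument the paper leaves implicit.
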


The example below highlights the way \coref{hoccal1ab} works in
order to classify JJ co-flag algebras. We shall denote by ${\rm
Sym} (n, k)$ the vector space of all $n\times n$ symmetric
matrices over $k$ and by ${\rm Sym} (n, k)/kI_n$ the quotient
vector space via the subspace of matrices of the form $a I_n$, for
all $a \in k$.

\begin{example} \exlabel{heiscof}
Let $\mathfrak{h} (2n + 1, k)$ be the $(2n+1)$-dimensional
commutative Heisenberg JJ algebra defined in \exref{frobex}. Then,
$$
{\mathbb G} {\mathbb H}^{2} \, (\mathfrak{h} (2n + 1, k), \, k) \,
\cong \, {\rm Sym} (n, k) \times {\rm Sym} (n, k) \times {\rm Sym}
(n, k)/kI_n
$$
The equivalence classes of all $(2n + 2)$-dimensional JJ algebras
with an algebra projection on $\mathfrak{h} (2n + 1, k)$ are the
following algebras defined for any $(a_{ij}) \times (b_{ij})
\times \overline{(c_{ij})} \in {\rm Sym} (n, k) \times {\rm Sym}
(n, k) \times {\rm Sym} (n, k)/kI_n$ as the vector space having
$\{y, \, e_1, \cdots, e_n, \, f_1, \cdots, f_n, \, z  \}$ as a
basis and the multiplication given for any $i$, $j = \cdots n$ by:
\begin{eqnarray}
e_i \bullet e_j = e_j \bullet e_i := a_{ij} \, y, \quad f_i
\bullet f_j = f_j \bullet f_i := b_{ij} \, y, \quad e_i \bullet
f_j = f_j \bullet e_i := \delta_i^j \, z + c_{ij} \, y
\eqlabel{primex}
\end{eqnarray}
We will prove first that there exists a bijection between
${\mathcal C} {\mathcal F} \, \bigl(\mathfrak{h} (2n + 1, k)
\bigl)$ and the space ${\rm Sym} (n, k)^3$ given such that the
co-flag data $(\lambda, \vartheta) \in {\mathcal C} {\mathcal F}
\, \bigl(\mathfrak{h} (2n + 1, k) \bigl)$ associated to a triple
$(A = (a_{ij}), \, B = (b_{ij}), C =(c_{ij}))$ consisting of
$n\times n$ symmetric matrices is given for any $i$, $j = 1,
\cdots, n$ by:
\begin{eqnarray*}
&& \lambda := 0, \quad \vartheta(e_i, \, e_j) = \vartheta(e_j, \,
e_i) := a_{ij}, \quad \vartheta(f_i, \, f_j) =\vartheta(f_j, \,
f_i) := b_{ij} \\
&& \vartheta(e_i, \, f_j) = \vartheta(e_i, \, f_j) := c_{ij},
\quad \vartheta( -, \, z) = \vartheta( z, \, -) = 0
\end{eqnarray*}
Indeed, let $(\lambda, \vartheta) \in {\mathcal C} {\mathcal F} \,
\bigl(\mathfrak{h} (2n + 1, k) \bigl)$ be a co-flag data. Applying
\equref{ciudat0} for the pair $(e_i, \, e_i)$, $(f_i, \, f_i)$ and
respectively $(e_i, \, f_i)$, taking into account that ${\rm char}
(k) \neq 2$ we obtain $\lambda (e_i) = \lambda (f_i) = \lambda (z)
= 0$, that is $\lambda := 0$, the trivial map. Thus, the
compatibility condition \equref{ciudat1} takes the simplified form
for any $a$, $b$, $c \in \{e_1, \cdots, e_n, f_1, \cdots, f_n, z
\}$:
\begin{equation} \eqlabel{redus}
\sum_{(c)} \, \vartheta (a, \, b \cdot c) = 0
\end{equation}
Applying this equation to the triples $(e_i, \, e_i, \, f_i)$,
$(e_i, \, f_i, \, f_i)$ and respectively $(e_i, \, f_i, \, z)$ we
obtain: $\vartheta (e_i, \, z) = \vartheta (f_i, \, z) = \vartheta
(z, \, z) = 0$, for all $i = 1, \cdots, n$. Thus, any bilinear map
$\vartheta$ of a co-flag datum of $\mathfrak{h} (2n + 1, k)$ is
implemented by three $n\times n$ matrices $(A = (a_{ij}), \, B =
(b_{ij}), C =(c_{ij}))$ via the two-sided formulas:
$$
\vartheta(e_i, \, e_j) :=: a_{ij}, \quad \vartheta(f_i, \, f_j)
:=: b_{ij}, \quad \vartheta(e_i, \, f_j) :=: c_{ij},
$$
for all $i$, $j = 1, \cdots, n$. Of course, $\vartheta$ is
symmetric if and only if $A$, $B$, $C$ are symmetric matrices.
Now, using the multiplication on $\mathfrak{h} (2n + 1, k)$, we
can easily see that such a bilinear map $\vartheta = \vartheta_{A,
B, C}$ satisfies \equref{redus}. Thus, the first part is proven.
We mention that the multiplication $\bullet$ on the JJ algebra
$\mathfrak{h} (2n + 1, k)_{\lambda, \, \vartheta}$ as defined by
\equref{primaalg} takes the form given in \equref{primex}.
Finally, a straightforward computation shows that the equivalence
relation as defined by \equref{primaechiv} of \prref{hoccal1},
takes the following form in the case for triples of symmetric
matrices: two triples of $n\times n$ symmetric matrices $(A, B,
C)$ and $(A', B', C')$ are equivalent $(A, B, C) \approx (A', B',
C')$ if and only if $A = A'$, $B = B'$ and there exists a scalar
$t\in k$ such that $C' - C = t I_n$. This finishes the proof.
\end{example}

The object ${\mathbb G} {\mathbb H}^{2} \, (A, \, k)$ classifies
all crossed products $A \# k$ up to an isomorphism of JJ algebras
which stabilizes $k$ and co-stabilizes $A$. In what follows we
will consider a less restrictive classification: we denote by
${\mathbb C} {\mathbb P} \, (A, \, k)$ the set of JJ algebra
isomorphism classes of all crossed products $A \# k$. Two
cohomologous crossed products $A \# k$ and $A \#' k$ are
isomorphic and therefore there exists a canonical projection
${\mathbb G} {\mathbb H}^{2} \, (A, \, k) \twoheadrightarrow
{\mathbb C} {\mathbb P} \, (A, \, k)$ between the two classifying
objects. Next in line is the task of computing ${\mathbb C}
{\mathbb P} \, (A, \, k)$.

\begin{theorem} \thlabel{clasres}
Let $A$ be a JJ algebra. Then there exists a bijection:
\begin{equation} \eqlabel{hoculmic}
{\mathbb C} {\mathbb P} \, (A, \, k) \cong {\mathcal C} {\mathcal
F} \, (A)/ \equiv
\end{equation}
where $\equiv$ is the equivalence relation on ${\mathcal C}
{\mathcal F} \, (A)$ defined by: $(\lambda, \vartheta) \equiv
(\lambda', \vartheta')$ if and only if there exists a triple
$(s_0, \, \psi, \, r) \in k^* \times {\rm Aut}_{\rm JJ-Alg} (A)
\times {\rm Hom}_k (A, \, k)$ such that
\begin{eqnarray}
\lambda = \lambda' \circ \psi, \quad \vartheta (a, b) \, s_0 =
\vartheta' \bigl(\psi(a), \psi(b) \bigl) + \lambda(a) r(b) +
\lambda(b) r(a) - r(a \cdot b) \eqlabel{2015b}
\end{eqnarray}
for all $a$, $b \in A$.
\end{theorem}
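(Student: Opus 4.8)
The plan is to read off ${\mathbb C}{\mathbb P}(A,k)$ as an explicit quotient and then match it with ${\mathcal C}{\mathcal F}(A)/\equiv$. By \coref{descefecti} and the bijection ${\mathcal C}{\mathcal S}(A,k)\cong{\mathcal C}{\mathcal F}(A)$ of \prref{cohocflag}, every crossed product $A\# k$ is one of the JJ algebras $A_{(\lambda,\vartheta)}$ with $(\lambda,\vartheta)\in{\mathcal C}{\mathcal F}(A)$; hence ${\mathbb C}{\mathbb P}(A,k)$ is exactly ${\mathcal C}{\mathcal F}(A)$ modulo the relation ``$A_{(\lambda,\vartheta)}\cong A_{(\lambda',\vartheta')}$ as JJ algebras''. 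Thus everything reduces to proving that this isomorphism relation is precisely $\equiv$.

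For the implication $\equiv\Rightarrow\cong$ I would produce the isomorphism by hand. Given a triple $(s_0,\psi,r)$ as in \equref{2015b}, set $\Psi\colon A_{(\lambda,\vartheta)}\to A_{(\lambda',\vartheta')}$, $\Psi(a,x):=(\psi(a),\,r(a)+s_0 x)$. Using the multiplication \equref{patratnul}, the hypothesis $\lambda=\lambda'\circ\psi$ and the cocycle-type identity in \equref{2015b}, a direct verification shows that $\Psi$ preserves the product; since $\psi$ is bijective and $s_0\in k^*$, the map $\Psi$ is an isomorphism (with inverse of the same shape), so $(\lambda,\vartheta)\equiv(\lambda',\vartheta')$ forces $A_{(\lambda,\vartheta)}\cong A_{(\lambda',\vartheta')}$.

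The converse is the real content. Starting from an arbitrary JJ algebra isomorphism $\Psi\colon A_{(\lambda,\vartheta)}\to A_{(\lambda',\vartheta')}$, I would write it in block form with respect to $A\times k$: as $k$ is one-dimensional, $\Psi(a,x)=(\psi_0(a)+x\,a_0,\ r(a)+s_0 x)$ for unique $\psi_0\in{\rm End}_k(A)$, $a_0\in A$, $r\in{\rm Hom}_k(A,\,k)$, $s_0\in k$. Imposing that $\Psi$ respects the two multiplications and comparing, for the free scalars $x,y$, the coefficients of $1,x,y,xy$ converts the identity into a finite system. Its $A$-part reads $\psi_0(a\cdot b)=\psi_0(a)\cdot\psi_0(b)-\vartheta(a,b)\,a_0$, $\ \psi_0(a)\cdot a_0=\lambda(a)\,a_0$ and $a_0\cdot a_0=0$; its $k$-part gives, in the coefficient of $y$, the relation $\lambda(a)s_0=\vartheta'(\psi_0(a),a_0)+\lambda'(\psi_0(a))s_0+\lambda'(a_0)r(a)$, and, in the constant coefficient, exactly \equref{2015b} once $a_0=0$. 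Indeed, when $a_0=0$ the $A$-part says $\psi_0\in{\rm Aut}_{\rm JJ-Alg}(A)$, bijectivity of $\Psi$ gives $s_0\in k^*$, the $y$-coefficient becomes $\lambda=\lambda'\circ\psi_0$, and the constant term is \equref{2015b}; thus $(s_0,\psi_0,r)$ witnesses $(\lambda,\vartheta)\equiv(\lambda',\vartheta')$.

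The main obstacle is therefore to dispose of the off-diagonal term $a_0=\Psi(0,1)|_A$, i.e.\ to show $\Psi$ may be taken to fix the distinguished line $\{0\}\times k$. This is where the fact that every JJ algebra is nilpotent enters decisively. Surjectivity of $\Psi$ forces $\psi_0(A)+k a_0=A$, so writing an arbitrary $c\in A$ as $\psi_0(a)+t a_0$ and using $\psi_0(a)\cdot a_0=\lambda(a)a_0$ together with $a_0\cdot a_0=0$ yields $c\cdot a_0\in k a_0$ for all $c$. Hence $a_0$ is a common eigenvector of every left multiplication $c\mapsto c\cdot a_0$; but in a nilpotent algebra each such operator is nilpotent, so its only eigenvalue is $0$, which forces $c\cdot a_0=0$ for all $c$ and, in particular, $\lambda=0$. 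Consequently a nonzero $a_0$ can occur only when $a_0\in Z(A)$ and $\lambda=0$, i.e.\ in the purely central situation where $k a_0$ is a central ideal of $A$. I expect this degenerate case to be the technical heart of the argument: here one reduces modulo $k a_0$ so that $\psi_0$ induces a genuine automorphism of $A/k a_0$, re-lifts it to an automorphism of $A$, and absorbs the resulting discrepancy into $r$ and $s_0$, thereby replacing $\Psi$ by an isomorphism with $a_0=0$ representing the same class. Once $a_0=0$ is achieved in all cases, the computation of the previous paragraph applies verbatim and produces the triple $(s_0,\psi_0,r)$, establishing $\cong\ \Rightarrow\ \equiv$ and hence the bijection \equref{hoculmic}.
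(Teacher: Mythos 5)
Your overall strategy is the same as the paper's: reduce via \coref{descefecti} and \prref{cohocflag} to describing all JJ algebra isomorphisms between two algebras $A_{(\lambda,\vartheta)}$ and $A_{(\lambda',\vartheta')}$, write an arbitrary linear map of $A\times k$ in block form $\Psi(a,x)=(\psi_0(a)+x\,a_0,\; r(a)+x\,s_0)$, and translate multiplicativity into a finite system of identities. Your forward implication ($\equiv$ implies isomorphism) and your derivation of that system --- the twisted morphism condition $\psi_0(a\cdot b)=\psi_0(a)\cdot\psi_0(b)-\vartheta(a,b)\,a_0$, the relations $\psi_0(a)\cdot a_0=\lambda(a)\,a_0$ and $a_0\cdot a_0=0$, and the two $k$-component identities --- are correct and agree with the paper's computation, as does the observation that once $a_0=0$ one recovers exactly the triple $(s_0,\psi_0,r)$ witnessing \equref{2015b}.

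The gap is precisely where you place it: you never prove that the off-diagonal term $a_0$ can be disposed of, i.e.\ that an isomorphism with $a_0\neq 0$ still forces $(\lambda,\vartheta)\equiv(\lambda',\vartheta')$. Your preliminary reduction of this case --- surjectivity gives $c\cdot a_0\in k a_0$ for all $c$, nilpotency of the multiplication operators then gives $c\cdot a_0=0$, hence $a_0\in Z(A)$ and $\lambda=0$ --- is correct, but the proposed resolution (``reduce modulo $k a_0$, re-lift, absorb into $r$ and $s_0$'') is only an expectation: $\psi_0$ need not preserve $k a_0$, an automorphism of $A/k a_0$ need not lift to one of $A$, and you do not show that the residual constraints (for instance $\vartheta'(a_0,b)=-\lambda'(b)s_0$ for all $b$, which is forced because $(0,1)$ annihilates $A_{(\lambda,\vartheta)}$ when $\lambda=0$, so its image $(a_0,s_0)$ must annihilate $A_{(\lambda',\vartheta')}$) can be absorbed into a new triple. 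So the implication $\cong\;\Rightarrow\;\equiv$ is not established. It is worth noting that the paper's own proof is weaker at exactly this point: it asserts that the single condition $\varphi\bigl((0,x)\bullet(0,y)\bigr)=\varphi(0,x)\bullet'\varphi(0,y)$ forces $\beta_0=0$, whereas that condition only yields $\beta_0\cdot\beta_0=0$ and $\vartheta'(\beta_0,\beta_0)+2s_0\lambda'(\beta_0)=0$ (for example, in $\mathfrak{h}(3,k)\oplus k$ there are genuine automorphisms with $\beta_0=z\neq 0$). Your analysis makes the missing case visible rather than hiding it, but it does not close it; to complete the proof you must either carry out the claimed reduction to $a_0=0$ in the degenerate situation $\lambda=0$, $a_0\in Z(A)$, or show directly that the system of identities valid there already implies $(\lambda,\vartheta)\equiv(\lambda',\vartheta')$.
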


\begin{proof}
From \coref{descefecti} we know that any crossed product $A \# k$
is isomorphic to $A_{(\lambda, \vartheta)}$, for some $(\lambda,
\vartheta) \in {\mathcal C} {\mathcal F} \, (A)$. Thus, we proof
reduces to describing the JJ algebra isomorphisms between two such
algebras $A_{(\lambda, \vartheta)}$. It is a simple fact to notice
that there exists a bijection between the set of all linear maps
$\varphi: A \times k \to A \times k$ and the set of quadruples
$(s_0, \, \beta_0, \, \psi, \, r) \in k\times A \times {\rm Hom}_k
(A, \, A) \times {\rm Hom}_k (A, \, k)$ given such that the linear
map $\varphi = \varphi_{(s_0, \, \beta_0, \, \psi, \, r)}$
associated to $(s_0, \, \beta_0, \, \psi, \, r)$ is defined for
any $a\in A$ and $x\in k$ by:
\begin{equation}\eqlabel{4morf}
\varphi (a, \, x) = \bigl( \psi(a) + x\, \beta_0, \, r(a) + x\,
s_0 \bigl)
\end{equation}

We start by proving that a linear map given by \equref{4morf} is
an isomorphism of JJ algebras from $A_{(\lambda, \vartheta)}$ to
$A_{(\lambda', \vartheta')}$ if and only if $\beta_0 = 0$, $s_0
\neq 0$, $\psi$ is a JJ algebra automorphism of $A$ and
\equref{2015b} holds. It can be easily seen that $\varphi \bigl(
(0, x) \bullet (0, y) \bigl) = \varphi (0, x) \bullet ' \varphi
(0, y)$ if and only if $\beta_0 = 0$, where by $\bullet '$ we
denote the multiplication of $A_{(\lambda', \vartheta')}$. Hence,
in order for $\varphi$ to be a JJ algebra map it should take the
following simplified form for any $a\in A$ and $x\in k$:
\begin{equation}\eqlabel{4morfb}
\varphi (a, \, x) = \bigl( \psi(a), \, r(a) + x\, s_0 \bigl)
\end{equation}
for some triple $(s_0, \, \psi, \, r) \in k \times {\rm Hom}_k (A,
\, A) \times {\rm Hom}_k (A, \, k)$. Next we prove that a linear
map given by \equref{4morfb} is a JJ algebra morphism from
$A_{(\lambda, \vartheta)}$ to $A_{(\lambda', \vartheta')}$ if and
only if $\psi: A \to A$ is a JJ algebra map and the following
compatibilities are fulfilled for any $a$, $b\in A$:
\begin{eqnarray}
&& \lambda (a) \, s_0 = \lambda'\bigl(\psi (a) \bigl) \, s_0,\eqlabel{2015cc} \\
&& r(a \cdot b) + \vartheta (a, b) \, s_0 = \vartheta'
\bigl(\psi(a), \psi(b) \bigl) + \lambda'(\psi(a)) r(b) +
\lambda'(\psi(b) ) r(a) \eqlabel{2015dd}
\end{eqnarray}
Indeed, it is straightforward to see that the compatibility
\equref{2015cc} is exactly the condition $\varphi \bigl( (a, 0)
\bullet (0, x) \bigl) = \varphi (a, 0) \bullet ' \varphi (0, x)$.
Finally, the condition $\varphi \bigl( (a, 0) \bullet (b, 0)
\bigl) = \varphi (a, 0) \bullet ' \varphi (b, 0)$ is equivalent to
the fact that $\psi$ is a JJ algebra endomorphism of $A$ and
\equref{2015dd} holds. We are left to prove that a JJ algebra map
$\varphi = \varphi_{(s_0, \, \psi, \, r)}$ given by
\equref{4morfb} is bijective if and only if $s_0 \neq 0$ and
$\psi$ is a JJ automorphism of $A$. Assume first that $s_0 \neq 0$
and $\psi$ is bijective with the inverse $\psi^{-1}$. Then, we can
see that $\varphi_{(s_0, \, \psi, \, r)}$ is a JJ algebra
isomorphism with the inverse given by $\varphi^{-1}_{(s_0, \,
\psi, \, r)} := \varphi_{(s_0^{-1}, \, \psi^{-1}, \, -(r\circ
\psi^{-1}) s_0^{-1})}$. Conversely, assume that $\varphi$ is
bijective. Then its inverse $\varphi^{-1}$ is a JJ algebra map and
thus has the form $\varphi^{-1} (a, \, x) = (\psi' (a), \, r' (a)
+ x s_0')$, for some triple $(s_0', r', \psi')$. If we write
$\varphi^{-1} \circ \varphi (0, \, 1) = (0, \, 1)$ we obtain that
$s_0 s_0' = 1$ i.e. $s_0$ is invertible in $k$. In the same way
$\varphi^{-1} \circ \varphi (a, \, 0) = (a, \, 0) = \varphi \circ
\varphi^{-1} (a, \, 0)$ gives that $\psi$ is bijective and $\psi'
= \psi^{-1}$.
\end{proof}

\begin{remark} \relabel{n=1}
Let us denote by $\mathfrak{h} (2n + 1, k)_{A, B, C}$ the JJ
algebra defined by \equref{primex}. These are the $(2n +
2)$-dimensional JJ algebras that admit an algebra surjection on
$\mathfrak{h} (2n + 1, k)$. This family of JJ algebras are
classified from the view point of the extension problem, i.e. up
to an isomorphism which stabilizes $k$ and co-stabilizes
$\mathfrak{h} (2n + 1, k)$, in \exref{heiscof} where the
classifying object ${\mathbb G} {\mathbb H}^{2} \, (\mathfrak{h}
(2n + 1, k), \, k)$ is computed. Now, their classification up to a
JJ algebra isomorphism by explicitly computing the second
classifying object, namely ${\mathbb C} {\mathbb P} \,
(\mathfrak{h} (2n + 1, k), \, k)$ as constructed in
\thref{clasres}, seems to be a very difficult task that will be
addressed somewhere else. Even if the compatibility condition
\equref{2015b} takes the simplified form $\vartheta (a, b) \, s_0
= \vartheta' \bigl(\psi(a), \psi(b) \bigl) - r(a \cdot b)$, this
is still more general than the classical Kronecker-Williamson
equivalence of bilinear forms \cite{will, horn}. Thus, the
starting point in addressing this problem should be a new and more
general classification theory for bilinear forms.

For instance, by taking $n = 1$ in \exref{heiscof} we obtain that
${\mathbb G} {\mathbb H}^{2} \, (\mathfrak{h} (3, k), \, k) \,
\cong \, k \times k \times \{0\} \cong k^2$. Thus, any
$4$-dimensional JJ algebra that admits an algebra surjection on
$\mathfrak{h} (3, k)$ is isomorphic to the JJ algebra having $\{y,
\, e, \, f, \, z \}$ as a basis and the multiplication defined by:
$$
e\cdot e := a \, y, \quad f\cdot f := b\, y, \quad e\cdot f := z
$$
for some $a$, $b\in k$. We denote by $(\mathfrak{h} (3, k), \,
k)_{(a, \, b)}$ this JJ algebra. If $k = k^2$, we can easily see
that, up to an isomorphism there are only three such algebras,
namely $(\mathfrak{h} (3, k), \, k)_{(0, \, 0)}$, $(\mathfrak{h}
(3, k), \, k)_{(1, \, 0)}$ and $(\mathfrak{h} (3, k), \, k)_{(1,
\, 1)}$ which appear in \cite[Proposition 3.4]{BF} in different
notational conventions though.
\end{remark}

As a consequence of \thref{clasres} we can derive a classification
result for the semidirect products of JJ algebras of the form $A
\ltimes k$. Recall from \exref{abelian} that a semidirect product
$A \ltimes k$ is just a crossed product $A_{(\lambda, \vartheta)}
= A\# k$ having a trivial $\lambda$-cocycle $\vartheta$. The
algebra obtained in this way will be denoted by $A_{\lambda}$.

\begin{corollary}\colabel{cassemidirect}
Let $A$ be a JJ algebra and $\lambda$, $\lambda' : A \to k$ two
linear maps satisfying \equref{semy}. Then there exists an
isomorphism of JJ algebras $A_{\lambda} \cong A_{\lambda'}$ if and
only if there exists $\psi \in {\rm Aut}_{\rm JJ-Alg} (A)$ such
that $\lambda = \lambda' \circ \psi$.
\end{corollary}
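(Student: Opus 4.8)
The plan is to read this off directly from \thref{clasres} by specializing to the case of trivial cocycles. First I would recall, via the bijection ${\mathcal C}{\mathcal S}(A, k) \cong {\mathcal C}{\mathcal F}(A)$ of \prref{cohocflag} together with \exref{abelian}, that the semidirect product $A_\lambda = A \ltimes k$ is precisely the crossed product $A_{(\lambda, \vartheta)}$ attached to the co-flag datum $(\lambda, 0)$ with vanishing cocycle $\vartheta = 0$. Consequently, classifying the algebras $A_\lambda$ up to JJ algebra isomorphism is exactly the problem of classifying the co-flag data of the special form $(\lambda, 0)$ under the equivalence relation $\equiv$ introduced in \thref{clasres}.

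Next I would invoke \thref{clasres} itself: we have $A_\lambda \cong A_{\lambda'}$ as JJ algebras if and only if $(\lambda, 0) \equiv (\lambda', 0)$, that is, there exists a triple $(s_0, \psi, r) \in k^* \times {\rm Aut}_{\rm JJ-Alg}(A) \times {\rm Hom}_k(A, k)$ satisfying $\lambda = \lambda' \circ \psi$ together with the second compatibility of \equref{2015b}, which upon setting $\vartheta = \vartheta' = 0$ reduces to
\[
\lambda(a)\, r(b) + \lambda(b)\, r(a) - r(a \cdot b) = 0
\]
for all $a$, $b \in A$. For the forward implication there is then nothing left to do: the very first condition appearing in this equivalence is already the desired statement $\lambda = \lambda' \circ \psi$.

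For the converse, suppose $\psi \in {\rm Aut}_{\rm JJ-Alg}(A)$ satisfies $\lambda = \lambda' \circ \psi$. I would simply exhibit the triple $(s_0, \psi, r) := (1, \psi, 0)$ and verify that it witnesses $(\lambda, 0) \equiv (\lambda', 0)$: the first condition holds by hypothesis, while the displayed cocycle-compatibility collapses to $0 = 0$ once $r$ is taken to be the zero map. By \thref{clasres} this yields $A_\lambda \cong A_{\lambda'}$. The only point requiring any attention — and it is an entirely mild one — is precisely this last observation, namely that in the presence of trivial cocycles the constraint \equref{2015b} imposes no condition on $r$ at all, so that the choice $r = 0$ is always available and makes the check immediate. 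There is therefore no genuine obstacle here; the full content of the corollary is already packaged inside \thref{clasres}.
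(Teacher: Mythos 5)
Your proof is correct and is exactly the argument the paper intends: the corollary is stated as a direct specialization of \thref{clasres} to co-flag data of the form $(\lambda,0)$, with the forward implication read off from the first condition in \equref{2015b} and the converse witnessed by the triple $(1,\psi,0)$. One small wording quibble: with $\vartheta=\vartheta'=0$ the relation \equref{2015b} does still impose the condition $\lambda(a)r(b)+\lambda(b)r(a)-r(a\cdot b)=0$ on $r$ (it is not vacuous for general $r$), but this is harmless since your chosen $r=0$ plainly satisfies it.
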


The first step towards determining the automorphism group ${\rm
Aut}_{\rm JJ-Alg} \, (A_{(\lambda, \vartheta)})$, for any
$(\lambda, \vartheta) \in {\mathcal C} {\mathcal F} \, (A)$ was
taken in the proof of \thref{clasres}. Let $k^*$ be the units
group of $k$, $k^* \times {\rm Aut}_{\rm JJ-Alg} (A)$ the direct
product of groups and $(A^*, +)$ the underlying abelian group of
the linear dual $A^* = {\rm Hom}_k (A, \, k)$. Then the map given
for any $s_0 \in k^*$, $\psi \in {\rm Aut}_{\rm JJ-Alg} (A)$ and
$r\in A^*$ by:
$$
\zeta: k^* \times {\rm Aut}_{\rm JJ-Alg} (A) \to {\rm Aut}_{\rm
Gr} \, (A^*, +), \qquad \zeta (s_0, \, \psi) \, (r) := s_0^{-1} \,
r \circ \psi
$$
is a morphism of groups. This enables us to construct the
semidirect product of groups $A^* \, \ltimes_{\zeta} \bigl(k^*
\times {\rm Aut}_{\rm JJ-Alg} (A) \bigl)$ associated to $\zeta$.
The next result describes ${\rm Aut}_{\rm JJ-Alg} (A_{(\lambda,
\vartheta)} )$ as a certain subgroup of the semidirect product
$A^* \, \ltimes_{\zeta} \bigl(k^* \times {\rm Aut}_{\rm JJ-Alg}
(A) \bigl)$.

\begin{corollary} \colabel{izoaut}
Let $A$ be a JJ algebra, $(\lambda, \vartheta) \in {\mathcal C}
{\mathcal F}\, (A)$ a co-flag datum of $A$ and let ${\mathcal G}
\bigl(A, \, (\lambda, \vartheta) \bigl)$ be the set of all triples
$(s_0, \, \psi, \, r) \in k^* \times {\rm Aut}_{\rm JJ-Alg} (A)
\times A^*$ such that for any $a$, $b\in A$:
$$
\lambda = \lambda \circ \psi, \quad \vartheta (a, b) \, s_0 =
\vartheta \bigl(\psi(a), \psi(b) \bigl) + \lambda(a) r(b) +
\lambda(b) r(a) - r(a \cdot b)
$$
Then, there exists an isomorphism of groups ${\rm Aut}_{\rm
JJ-Alg} ( A_{(\lambda, \vartheta)} ) \cong {\mathcal G} \,
\bigl(A, \, (\lambda, \vartheta) \bigl)$, where ${\mathcal G} \,
\bigl(A, \, (\lambda, \vartheta) \bigl)$ is a group with respect
to the following multiplication:
\begin{equation} \eqlabel{graut}
(s_0, \, \psi, \, r) \star (s_0', \, \psi', \, r') := (s_0 s_0',
\, \psi\circ \psi', \, r \circ \psi' + s_0 r' )
\end{equation}
for all $(s_0, \, \psi, \, r)$, $(s_0', \, \psi', \, r') \in \in
{\mathcal G} \, \bigl(A, \, (\lambda, \vartheta) \bigl)$.
Moreover, the canonical map
$$
{\mathcal G} \, \bigl(A, \, (\lambda, \vartheta) \bigl)
\longrightarrow A^* \, \ltimes_{\zeta} \bigl(k^* \times {\rm
Aut}_{\rm JJ-Alg} (A) \bigl), \qquad (s_0, \, \psi, \, r) \mapsto
\bigl (s_0^{-1} r, \, (s_0, \, \psi) \bigl)
$$
in an injective morphism of groups.
\end{corollary}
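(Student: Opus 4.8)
The plan is to extract the entire statement from the endomorphism analysis already performed in the proof of \thref{clasres}, specialized to the situation where source and target coincide, namely $(\lambda', \vartheta') = (\lambda, \vartheta)$. There it was shown that an arbitrary linear map $\varphi \colon A \times k \to A \times k$ is encoded by a quadruple $(s_0, \beta_0, \psi, r)$ via \equref{4morf}, and that $\varphi$ is a JJ algebra morphism $A_{(\lambda, \vartheta)} \to A_{(\lambda', \vartheta')}$ precisely when $\beta_0 = 0$, $\psi$ is a JJ algebra endomorphism of $A$, and conditions \equref{2015cc}--\equref{2015dd} hold; moreover $\varphi$ is bijective if and only if $s_0 \in k^*$ and $\psi \in {\rm Aut}_{\rm JJ-Alg}(A)$. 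Setting $\lambda' = \lambda$ and $\vartheta' = \vartheta$, condition \equref{2015cc} reduces (after cancelling the nonzero $s_0$) to $\lambda = \lambda \circ \psi$, and \equref{2015dd} becomes exactly the second relation defining $\mathcal{G}\bigl(A, (\lambda, \vartheta)\bigr)$. Thus the assignment $(s_0, \psi, r) \mapsto \varphi_{(s_0, \psi, r)}$, with $\varphi_{(s_0, \psi, r)}$ of the reduced form \equref{4morfb}, is a well-defined bijection from $\mathcal{G}\bigl(A, (\lambda, \vartheta)\bigr)$ onto ${\rm Aut}_{\rm JJ-Alg}(A_{(\lambda, \vartheta)})$.

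It remains to transport the group structure. First I would compute the composite $\varphi_{(s_0, \psi, r)} \circ \varphi_{(s_0', \psi', r')}$ directly on a generic element $(a, x)$ using \equref{4morfb}: applying the inner map yields $(\psi'(a), r'(a) + x s_0')$ and then the outer map produces $\bigl((\psi \circ \psi')(a), (r \circ \psi')(a) + s_0 r'(a) + x s_0 s_0'\bigr)$, which is precisely $\varphi_{(s_0 s_0', \psi \circ \psi', r \circ \psi' + s_0 r')}$. Hence the bijection carries composition of automorphisms to the operation $\star$ of \equref{graut}. Since $\bigl({\rm Aut}_{\rm JJ-Alg}(A_{(\lambda, \vartheta)}), \circ\bigr)$ is a group and the bijection is compatible with the two operations, $\mathcal{G}\bigl(A, (\lambda, \vartheta)\bigr)$ automatically inherits a group structure for which $\star$ is the multiplication, with unit $(1, {\rm Id}_A, 0)$ and inverses given by the formula established in the proof of \thref{clasres}; no separate check of associativity or closure is needed, closure being merely the statement that the composite of two automorphisms is again one. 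This yields the claimed isomorphism ${\rm Aut}_{\rm JJ-Alg}(A_{(\lambda, \vartheta)}) \cong \mathcal{G}\bigl(A, (\lambda, \vartheta)\bigr)$.

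For the final assertion I would check that $(s_0, \psi, r) \mapsto \bigl(s_0^{-1} r, (s_0, \psi)\bigr)$ is an injective morphism into $A^* \ltimes_{\zeta} \bigl(k^* \times {\rm Aut}_{\rm JJ-Alg}(A)\bigr)$. Injectivity is immediate, since the second component recovers $(s_0, \psi)$ and then $s_0^{-1} r$ recovers $r$. The morphism property is the delicate point and the main obstacle: one must substitute $g \star g'$ into the map and expand it, then independently multiply the two images inside the semidirect product using its rule $\bigl(n_1, h_1\bigr)\bigl(n_2, h_2\bigr) = \bigl(n_1 + \zeta(h_1)(n_2), h_1 h_2\bigr)$ together with $\zeta(s_0, \psi)(r) = s_0^{-1}\, r \circ \psi$, and confront the two outputs. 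The whole difficulty lies in keeping straight the scalar factors introduced by the $s_0^{-1}$ normalization and the order of the composition twists ($r \circ \psi'$ in $\star$ versus the $\zeta$-action twist), and in fixing once and for all the composition-order convention that makes $\zeta$ a genuine group homomorphism; this same convention is what must then make the embedding multiplicative. Once that bookkeeping is organized the verification is routine, and its payoff is conceptual: the embedding exhibits ${\rm Aut}_{\rm JJ-Alg}(A_{(\lambda, \vartheta)})$, which a priori depends on the cocycle $\vartheta$, as a subgroup of an object depending only on the base algebra $A$.
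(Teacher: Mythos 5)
Your proposal is correct and follows essentially the same route as the paper: both rest on the endomorphism analysis and the composition formula $\varphi_{(s_0, \psi, r)} \circ \varphi_{(s_0', \psi', r')} = \varphi_{(s_0 s_0', \, \psi \circ \psi', \, r\circ \psi' + s_0 r')}$ established in (the proof of) \thref{clasres}, and both leave the final embedding claim to a routine verification. The only cosmetic difference is that you obtain the group structure on ${\mathcal G} \bigl(A, (\lambda, \vartheta)\bigr)$ by transport along the bijection, whereas the paper checks directly that $\star$ makes it a group with unit $(1, {\rm Id}_A, 0)$ and inverse $(s_0^{-1}, \psi^{-1}, -s_0^{-1}(r\circ\psi^{-1}))$; your remark about fixing the semidirect-product convention so that $\zeta$ is a genuine homomorphism is a fair and accurate caveat.
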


\begin{proof} It can be easily seen by a routinely computation that ${\mathcal G} \,  \bigl(A, \, (\lambda, \vartheta)
\bigl)$ is a group with respect to the multiplication
\equref{graut}; the unit is $(1, \, {\rm Id}_A, \, 0)$ and the
inverse of an element $(s_0, \, \psi, \, r)$ is $(s_0^{-1}, \,
\psi^{-1}, \, - s_0^{-1} (r\circ \psi^{-1}))$. Furthermore, from
(the proof of) \thref{clasres} we have $\varphi_{(s_0, \psi, r)}
\circ \varphi_{(s_0', \psi', r')} = \varphi_{(s_0 s_0', \, \psi
\circ \psi', \, r\circ \psi' + s_0 r')}$, where $\varphi_{(s_0,
\psi, r)}$ is an automorphism of $A_{(\lambda, \vartheta)}$ given
by \equref{4morfb}. This settles the first statement. Finally, the
last assertion follows by a straightforward computation.
\end{proof}

\subsection*{Metabelian algebras.}
We recall that a JJ algebra $J$ is called \emph{metabelian} if the
derived algebra $J'$ is an abelian subalgebra of $J$, i.e.
$(a\cdot b) \cdot (c \cdot d) = 0$, for all $a$, $b$, $c$, $d \in
J$. If $I$ is an ideal of $J$, then the quotient algebra $J/I$ is
an abelian algebra if and only if $J' \subseteq I$. Thus, $J$ is a
metabelian JJ algebra if and only if it fits into an exact
sequence of JJ algebras
\begin{eqnarray*} \eqlabel{sirmetab}
\xymatrix{ 0 \ar[r] & B \ar[r]^{i} & {J} \ar[r]^{\pi} & A \ar[r] &
0 }
\end{eqnarray*}
where $B$ and $A$ are abelian algebras. Indeed, if $J$ is
metabelian we can take $B := J'$ and $C := J/J'$ with the obvious
morphisms. Conversely, assume that $J$ is an extension of an
abelian algebra $A$ by an abelian algebra $B$. Since $J/i(B) =
J/{\rm Ker}(\pi) \cong A$ is an abelian algebra, we obtain that
$J' \subseteq i(B) \cong B$. Hence, $J'$ is abelian as a
subalgebra in an abelian algebra, i.e. $J$ is metabelian. This
remark together with \prref{hocechiv} proves that a JJ algebra $J$
is metabelian if and only if there exists an isomorphism $J \cong
A \# V$, where $A \# V$ is a crossed product associated to a
crossed system $(\triangleright, \vartheta)$ between an two
abelian algebras $A$ and $V$. Using the results of
\seref{crossed}, this observation allows us to prove a structure
type theorem for metabelian JJ algebras as well as a
classification result for them. This is what we do next. A
\emph{metabelian system} of a vector space $A$ by a vector space
$V$ is a pair $(\triangleright, \vartheta)$ consisting of two
bilinear maps $\triangleright : A \times V \to V$ and $\vartheta :
A \times A \to V$ such that $\vartheta$ is symmetric and
\begin{eqnarray}
a \triangleright (b \triangleright x) = - \, b \triangleright (a
\triangleright x), \qquad \sum_{(c)} a \triangleright \vartheta
(b, \, c) = 0 \eqlabel{metdat}
\end{eqnarray}
for all $a$, $b$, $c\in A$ and $x\in V$. These are exactly the
axioms remaining from (J1)- (J4) of \prref{hocprod} if we consider
both $\cdot_A$ and $\cdot_V$ to be equal to the trivial
multiplication. We denote by ${\mathcal M}{\mathcal A} \, (A, \,
V)$ the set of all metabelian systems of $A$ by $V$. For a pair
$(\triangleright, \vartheta) \in {\mathcal M}{\mathcal A} \, (A,
\, V)$ the associated crossed product $A \# V$ will be denoted by
$A \#_{(\triangleright, \, \vartheta)} \, V$: it has $A \times V$
as the underlying vector space while the multiplication takes the
following simplified form:
\begin{equation} \eqlabel{metprod}
(a, \, x) \cdot (b, \, y) := (0, \,\, \vartheta (a, \, b) + a
\triangleright y + b \triangleright x)
\end{equation}
for all $a$, $b\in A$ and $x$, $y \in V$. Now we fix a bilinear
map $\triangleright : A \times V \to V$ satisfying the first
equation of \equref{metdat} and denote by ${\rm
Z}^2_{\triangleright} \, (A_0, \, V_0) $ the set of all symmetric,
abelian $\triangleright$-cocycles: i.e. ${\rm
Z}^2_{\triangleright} \, (A_0, \, V_0)$ consists of all symmetric
bilinear maps $\vartheta : A \times A \to V$ satisfying the second
equation of \equref{metdat}. Two such $\triangleright$-cocycles
$\vartheta$ and $\vartheta' \in {\rm Z}^2_{\triangleright} \,
(A_0, \, V_0)$ will be called cohomologous $\vartheta \approx_0
\vartheta'$ if and only if there exists a linear map $r: A \to V$
such that for any $a$, $b\in A$:
$$
\vartheta (a, \, b) = \vartheta' (a, \, b) + a\triangleright r(b)
+ b \triangleright r(a)
$$
Then $\approx_0$ is an equivalence relation on ${\rm
Z}^2_{\triangleright} \, (A_0, \, V_0) $ and the quotient set
${\rm Z}^2_{\triangleright} \, (A_0, \, V_0)/ \approx_0$ will be
denoted by ${\rm H}^2_{\triangleright} \, (A_0, \, V_0)$. We also
recall that ${\mathbb H}^{2} \, \bigl(A_0, \, V_0 \bigl)$
classifies up to an isomorphism that stabilizes $V_0$ and
co-stabilizes $A_0$ all metabelian algebras which are extensions
of $A_0$ by $V_0$. We can now put the above observations together
with \coref{cazuabspargere} and we will obtain the following
result on the structure of metabelian algebras as well as their
classification from the view point of the extension problem:

\begin{corollary}\colabel{cazulmeatabclas}
(1) A JJ algebra $J$ is metabelian if and only if there exists an
isomorphism of JJ algebras $J \cong A \#_{(\triangleright, \,
\vartheta)} \, V$, for some vector spaces $A$ and $V$ and a
metabelian system $(\triangleright, \, \vartheta) \in {\mathcal
M}{\mathcal A} \, (A, \, V)$.

(2) If $A$ and $V$ are vector spaces viewed with the abelian
algebra structure $A_0$ and $V_0$, then there exists a bijection:
\begin{equation} \eqlabel{metclasfor}
{\mathbb H}^{2} \, \bigl(A_0, \, V_0 \bigl) \, \cong \,
\sqcup_{\triangleright} \, {\rm H}^2_{\triangleright} \, (A_0, \,
V_0)
\end{equation}
where the coproduct on the right hand side is in the category of
sets over all possible bilinear maps $\triangleright : A \times V
\to V$ such that $a \triangleright (b \triangleright x) = - \, b
\triangleright (a \triangleright x)$, for all $a$, $b\in A$ and
$x\in V$.
\end{corollary}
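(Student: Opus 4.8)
The plan is to deduce both parts from the general crossed-product machinery of \seref{crossed}, specialized to the situation where the multiplications on both factors vanish. The entire content is the verification that each general axiom degenerates to the correct metabelian axiom once $\cdot_A = 0$ and $\cdot_V = 0$.

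For part (1), I would start from the remark preceding the statement, which shows that $J$ is metabelian if and only if it sits in a short exact sequence $0 \to B \to J \to A \to 0$ with $A$ and $B$ abelian; concretely one takes $A := J/J'$ and $V := J'$, both abelian. By \prref{hocechiv}, any such $J$ is isomorphic as a JJ algebra to a crossed product $A \# V$ with $V = \Ker(\pi)$, the isomorphism stabilizing $V$ and co-stabilizing $A$; since $\cdot_V$ is the restriction of the multiplication of $J$ to $J'$, it is trivial, and $\cdot_A$ is trivial as $A = J/J'$ is abelian. The key verification is then that, with $\cdot_A = 0$ and $\cdot_V = 0$, the crossed-system axioms (J1)--(J4) of \prref{hocprod} collapse precisely to the metabelian-system axioms \equref{metdat}: (J3) becomes vacuous, (J1) reduces to symmetry of $\vartheta$, (J2) becomes the first identity of \equref{metdat}, and (J4) becomes the second (using $b \cdot c = 0$). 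Thus the crossed system underlying $J$ is exactly a metabelian system. For the converse, given $(\triangleright, \vartheta) \in {\mathcal M}{\mathcal A}(A, V)$, the canonical projection $\pi_A \colon A \#_{(\triangleright, \vartheta)} V \to A_0$ is a surjective JJ algebra map onto an abelian algebra with abelian kernel $\{0\} \times V$, so the crossed product is an extension of an abelian algebra by an abelian algebra and hence metabelian by the same remark.

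For part (2), the claim is exactly \coref{cazuabspargere} in the special case where the base algebra is itself abelian, i.e. $A = A_0$. The one point to pin down is that the index set of the coproduct matches: when $A = A_0$ is abelian, a JJ $A_0$-module structure in the sense of \deref{moduleJJ} is a bilinear map $\triangleright \colon A \times V \to V$ for which the defining axiom \equref{JJmod} reads $0 = -a \triangleright (b \triangleright x) - b \triangleright (a \triangleright x)$, that is, $a \triangleright (b \triangleright x) = - b \triangleright (a \triangleright x)$. Hence the JJ $A_0$-modules on $V$ are exactly the bilinear maps indexing the coproduct in \equref{metclasfor}. Along the same specialization, the cocycle condition \equref{cocycleb} loses its $\vartheta(a, b \cdot c)$ terms and becomes the second identity of \equref{metdat}, while the coboundary relation \equref{cazslab} loses its $r(a \cdot b)$ term; these are precisely the conditions defining ${\rm Z}^2_{\triangleright}(A_0, V_0)$ and $\approx_0$ in the metabelian setting. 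Feeding these identifications into \coref{cazuabspargere} yields the bijection \equref{metclasfor}.

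I expect no genuine obstacle: both parts are formal consequences of the earlier results. The only place demanding a little care is making sure that \emph{every} term silently dropped really does vanish — in particular that $\vartheta(a, 0) = 0$ and $a \triangleright 0 = 0$ by bilinearity — so that no spurious constraint survives and no necessary one is lost in passing from the general axioms to the metabelian ones.
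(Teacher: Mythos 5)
Your proposal is correct and follows essentially the same route as the paper: part (1) is obtained from the remark characterizing metabelian algebras as extensions of abelian by abelian together with \prref{hocechiv} and the observation that (J1)--(J4) collapse to \equref{metdat} when both multiplications vanish, and part (2) is \coref{cazuabspargere} specialized to $A = A_0$, exactly as the paper assembles it. The specializations you check (module axiom, cocycle condition, coboundary relation) are all verified correctly.
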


To get some insight on the above result we explain briefly how it
works for a metabelian JJ algebra of dimension $n$. An invariant
of such algebras is the dimension of the derived algebra. Thus, we
have to fix a positive integer $1 \leq m \leq n$. Any metabelian
algebra of dimension $n$ that has the derived algebra of dimension
$m$ is isomorphic to a crossed product $
k^{n-m}_0\#_{(\triangleright, \, \vartheta)} \, k^m_0$. Thus, the
first computational and laborious part requires computing all
metabelian systems ${\mathcal M}{\mathcal A} \, (k^{n-m}, \, k^m)$
between $k^{n-m}$ and $k^m$. Next in line is the cohomological
object ${\mathbb H}^{2} \, \bigl(k^{n-m}_0, \, k^m_0 \bigl)$ whose
description relies on the decomposition given in
\equref{metclasfor}. Its elements will classify all metabelian
algebras $k^{n-m}_0\#_{(\triangleright, \, \vartheta)} \, k^m_0$
up to an isomorphism which stabilizes $k^m$ and co-stabilizes
$k^{n-m}$. Computing the other classification object, namely
${\mathbb C} {\mathbb P} \, (k^{n-m}, \, k^m)$ which is a quotient
set of ${\mathbb H}^{2} \, \bigl(k^{n-m}_0, \, k^m_0 \bigl)$ seems
to be a very difficult task -- the corresponding problem at the
level of groups is the famous \emph{isomorphism problem} for
metabelian groups, which seems to be connected to Hilbert�s Tenth
problem, i.e. the problem is algorithmically undecidable (cf.
\cite{baums}).

Below we present two generic examples in order to highlight the
difficulty of the problem, namely we will deal with metabelian JJ
algebras having the derived algebra of dimension $1$ (resp.
codimension 1). Henceforth, we denote by ${\rm Sym} (A \times A,
\, k)$ the space of all symmetric bilinear forms on a vector space
$A$.

\begin{proposition} \prlabel{dim1der}
Let $k$ be a field of characteristic $\neq 2 , 3$ and $A$ a vector
space. Then:

(1) There exists a bijection
$$
{\mathbb G} {\mathbb H}^{2} \, \bigl(A_0, \, k \bigl) \, = \,
{\mathbb H}^{2} \, \bigl(A_0, \, k_0 \bigl) \, \cong \, {\rm Sym}
(A \times A, \, k)
$$
given such that the crossed product $A_{\vartheta} := A_0 \# k_0$
associated to $\vartheta \in {\rm Sym} (A \times A, \, k)$ is the
JJ algebra with the multiplication given for any $a$, $b \in A$,
$x$, $y\in k$ by:
\begin{equation} \eqlabel{patratnulab}
(a, x) \bullet (b, y) = \bigl(0, \,\, \vartheta (a, b) \bigl)
\end{equation}
(2) A metabelian JJ algebra has the derived algebra of dimension
$1$ if and only if it is isomorphic to $A_{\vartheta}$, for some
vector space $A$ and $0\neq \vartheta \in {\rm Sym} (A \times A,
\, k)$.

(3) Two JJ algebras $A_{\vartheta}$ and $A_{\vartheta'}$ are
isomorphic if and only if the symmetric bilinear forms $\vartheta$
and $\vartheta'$ are \emph{homothetic}, i.e. there exists a pair
$(s_0, \, \psi) \in k^* \times {\rm Aut}_k (A)$ such that $s_0 \,
\vartheta (a, \, b) = \theta' \bigl( \psi(a), \, \psi (b) \bigl)$,
for all $a$, $b\in A$.
\end{proposition}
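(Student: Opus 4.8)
The plan is to handle the three parts separately, the unifying idea being that both $A$ and the one-dimensional kernel carry only the abelian multiplication, so the characteristic assumptions ${\rm char}(k)\neq 2,3$ will collapse every piece of non-abelian data supplied by the general theory of \seref{crossed}. For part (1) I would start from \coref{hoccal1ab}, which presents ${\mathbb G}{\mathbb H}^{2}(A_0,k)$ as the coproduct $\sqcup_{\lambda}{\rm H}^2_{\lambda}(A_0,k)$ over all linear maps $\lambda:A\to k$ satisfying \equref{ciudat0}. Since $A=A_0$ is abelian, $a\cdot b=0$ and \equref{ciudat0} reads $0=-2\lambda(a)\lambda(b)$; as ${\rm char}(k)\neq 2$ this forces $\lambda(a)\lambda(b)=0$ for all $a,b$, hence $\lambda=0$ is the only admissible map and the coproduct has a single summand. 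For $\lambda=0$ the cocycle condition \equref{ciudat1} holds automatically (every $b\cdot c$ vanishes and $\lambda=0$), so the set of $\lambda$-cocycles is exactly ${\rm Sym}(A\times A,k)$, while the coboundary relation \equref{primaechiv} degenerates to $\vartheta=\vartheta'$, so no identifications occur and ${\mathbb G}{\mathbb H}^{2}(A_0,k)\cong{\rm Sym}(A\times A,k)$. The identity ${\mathbb G}{\mathbb H}^{2}(A_0,k)={\mathbb H}^{2}(A_0,k_0)$ then follows from \coref{desccompcon}: a one-dimensional JJ algebra $e\cdot e=\mu e$ forces $3\mu^2=0$ by the Jacobi identity, so ${\rm char}(k)\neq 3$ leaves only the trivial structure on $k$ and the coproduct over all $\cdot_V$ reduces to the single abelian term. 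The multiplication \equref{patratnulab} is read off from \equref{patratnul} (equivalently \equref{metprod}) by setting $\lambda=0$, $\triangleright=0$ and $a\cdot b=0$.

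For part (2) I would first compute the derived algebra of $A_{\vartheta}$: by \equref{patratnulab} every product equals $(0,\vartheta(a,b))$, so $(A_{\vartheta})'=\{0\}\times\im\vartheta$, which is one-dimensional precisely when $\vartheta\neq 0$. For the converse, given a metabelian JJ algebra $J$ with $\dim J'=1$, I would use the canonical sequence $0\to J'\to J\to J/J'\to 0$: the quotient $J/J'$ is abelian (all products vanish modulo $J'$), $J'$ is abelian because $J$ is metabelian, and $J'\cong k$. Applying \prref{hocechiv} to the projection $\pi:J\to J/J'$ with kernel $V=J'\cong k$ realizes $J$ as a crossed product $A\# k$ with $A=J/J'$ abelian and $\cdot_V=0$ (the latter since $J'\cdot J'=0$). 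Because $V=J'\cong k$ is one-dimensional, the argument of part (1) (using ${\rm char}(k)\neq 2$) forces the action to be trivial, so $J\cong A_{\vartheta}$ for the resulting symmetric $\vartheta$, and $\vartheta\neq 0$ since $\dim J'=1$.

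For part (3) the cleanest route is to specialize \thref{clasres} to $A=A_0$, where ${\mathbb C}{\mathbb P}(A,k)$ classifies the crossed products $A\# k$ up to JJ algebra isomorphism via the relation \equref{2015b}. Here ${\rm Aut}_{\rm JJ-Alg}(A_0)={\rm Aut}_k(A)$, since any linear automorphism preserves the trivial product, and $\lambda=\lambda'=0$ with $a\cdot b=0$; consequently \equref{2015b} collapses to the single requirement $s_0\,\vartheta(a,b)=\vartheta'(\psi(a),\psi(b))$ for some $(s_0,\psi)\in k^*\times{\rm Aut}_k(A)$, the coboundary parameter $r$ dropping out entirely. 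This is exactly the homothety relation, giving $A_{\vartheta}\cong A_{\vartheta'}$ if and only if $\vartheta,\vartheta'$ are homothetic. As a direct check one can argue that any isomorphism must carry the one-dimensional derived algebra $\{0\}\times k$ to itself, so $\varphi(0,x)=(0,s_0x)$ and $\varphi(a,0)=(\psi(a),r(a))$, and the morphism condition then yields precisely $s_0\vartheta(a,b)=\vartheta'(\psi(a),\psi(b))$.

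I do not expect a genuine obstacle, since all three parts are specializations of the machinery already established: the statement is really a dictionary translating the general classifying objects into the language of symmetric bilinear forms. The only points demanding care are the two characteristic arguments (using ${\rm char}(k)\neq 2$ to kill the action and the scalar $\lambda$, and ${\rm char}(k)\neq 3$ to exclude a nontrivial one-dimensional structure) and the bookkeeping verifying that the coboundary datum $r$ plays no role in the isomorphism criterion of part (3).
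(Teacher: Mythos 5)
Your proposal is correct and follows essentially the same route as the paper: reduce to the co-flag machinery of \prref{cohocflag}, \prref{hoccal1} and \thref{clasres}, use ${\rm char}(k)\neq 2$ to kill $\lambda$ (hence the action) and ${\rm char}(k)\neq 3$ to exclude a nontrivial multiplication on $k$, observe that the cocycle condition is vacuous and the coboundary relation degenerates since $A$ is abelian, and read off the homothety relation from \equref{2015b}. Your explicit verification of the converse in part (2) via the sequence $0\to J'\to J\to J/J'\to 0$ and \prref{hocechiv} merely fills in a step the paper leaves implicit.
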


\begin{proof} As we already pointed out, since ${\rm char} (k) \neq 3$
the only JJ algebra structure on the vector space $k$ is the
abelian one. Thus ${\mathbb G} {\mathbb H}^{2} \, \bigl(A_0, \, k
\bigl) \, = \, {\mathbb H}^{2} \, \bigl(A_0, \, k_0 \bigl)$. Now
we apply \prref{cohocflag} which gives a bijection ${\mathcal
C}{\mathcal S} \, (A_0, \, k) \cong {\mathcal C} {\mathcal F} \,
(A_0)$. Since ${\rm char} (k) \neq 2$ any map $\lambda$ of a
co-flag datum $(\lambda, \vartheta) \in {\mathcal C}{\mathcal S}
\, (A_0, \, k)$ is trivial, thanks to \equref{ciudat0}. It follows
that the compatibility condition \equref{ciudat1} is automatically
fulfilled for any $\vartheta \in {\rm Sym} (A \times A, \, k)$.
Thus, we have proved that ${\mathcal C} {\mathcal F} \, (A_0)
\cong {\rm Sym} (A \times A, \, k)$. The first part as well as the
statement in $(2)$ follow from here once we observe that the
equivalence relation given by \equref{primaechiv} of
\prref{hoccal1} becomes equality since $A$ is abelian. Finally,
the statement in $(3)$ follows from the classification result of
\thref{clasres} applied for $A_0$ and taking into account that in
any co-flag datum of an abelian JJ algebra we have $\lambda = 0$.
\end{proof}

\begin{remark} \relabel{homotetic}
The classification result established in \prref{dim1der} (3)
reduces the classification problem of metabelian JJ algebras
having the derived algebra of dimension $1$ to the classification
of symmetric forms on a vector space up to the homotethic
equivalence relation. Now, any two isometric symmetric bilinear
forms are homothetic just by taking $s_0 := 1$. Furthermore, if $k
= k^2$ the converse is also true: that is, two symmetric bilinear
forms are homothetic if and only if they are isometric. In this
case, there is a well developed theory of symmetric bilinear forms
\cite{miln}. For instance, if $k = \CC$ the equivalence classes of
the symmetric bilinear forms coincide with the set of all diagonal
matrices with only 1s or 0s on the diagonal (the number of 0s is
the dimension of the radical of the bilinear form). In the case
that $k$ is a field with $k \neq k^2$ a new classification theory
of bilinear forms up to the homotethic relation needs to be
developed.
\end{remark}

\begin{example} \exlabel{complexmet1}
Using \prref{dim1der} and the classification of all complex
symmetric bilinear forms \cite{miln} we obtain the classification
of all $(n+1)$-dimensional metabelian JJ algebras having the
derived algebra of dimension $1$. Up to an isomorphism, there are
precisely $n$ such JJ algebras defined as follows. For any $t = 1,
\cdots, n$ we denote by $J_t$ the JJ algebra having $\{f, e_1,
\cdots, e_n \}$ as a basis and the multiplication given by
$$
e_1 \cdot e_1 = e_2 \cdot e_2 = \cdots = e_t \cdot e_t := f
$$
A complex $(n+1)$-dimensional JJ metabelian algebra $A$ has the
derived algebra of dimension $1$ if and only if $A \cong J_t$, for
some $t = 1, \cdots, n$. Adding to this family the abelian algebra
$k_0^{n+1}$ of dimension $n+1$, we can conclude that ${\mathbb C}
{\mathbb P} \, (k_0^{n}, \, k) = \{k_0^{n+1}, \, J_1, \cdots, J_n
\}$.
\end{example}

Very interesting is the opposite case concerning the description
of all JJ algebras $A$ having an abelian derived algebra $A'$ of
codimension $1$. This is just the Jacobi-Jordan version of the
classification of Lie algebras of dimension $n$ having the Schur
invariant equal to $n - 1$ - see \cite{BC, CT} for details. Let
$V$ be a vector space, $f \in {\rm End}_k (V)$ with $f^2 = 0$ and
$v_0 \in {\rm Ker} (f)$. Let $V_{(f, \, v_0)} := k \times V$ with
the multiplication defined for any $p$, $q \in k$ and $x$, $y \in
V$ by:
\begin{equation} \eqlabel{metcodim1}
(p, \, x) \cdot (q, \, y) := (0, \, pq \, v_0 + p f(y) + q f(x)
\,)
\end{equation}
Then $V_{(f, \, v_0)}$ is a JJ algebra as it can be seen by a
direct computation as well as from the proof of our next result.

\begin{proposition} \prlabel{codim1der}
Let $k$ be a field of characteristic $\neq 2 , 3$. Then:

(1) A JJ algebra has an abelian derived algebra of codimension $1$
if and only if it is isomorphic to a JJ algebra of the form
$V_{(f, \, v_0)}$, for some vector space $V$, $f \in {\rm End}_k
(V)$ with $f^2 = 0$ and $v_0 \in {\rm Ker} (f)$ such that $(f, \,
v_0) \neq (0, \, 0)$.

(2) For any vector space $V$ there exists a bijection:
$$
{\mathbb H}^{2} \, \bigl(k_0, \, V_0 \bigl) \, \cong \, \sqcup_{f}
\,\, {\rm Ker} (f)/{\rm Im} (f)
$$
where the coproduct in the right hand side is in the category of
sets over all possible linear endomorphisms $f \in {\rm End}_k
(V)$ with $f^2 = 0$. The bijection sends any element
$\overline{v_0} \in {\rm Ker} (f)/{\rm Im} (f)$ to the JJ algebra
$V_{(f, \, v_0)}$.
\end{proposition}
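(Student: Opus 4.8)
The plan is to reduce everything to crossed products of the one-dimensional abelian algebra $k_0$ by the abelian algebra $V_0$, and then to parametrize the relevant metabelian systems by a single endomorphism $f$ together with a single vector $v_0$. Concretely, a metabelian system $(\triangleright, \, \vartheta) \in {\mathcal M}{\mathcal A}(k, \, V)$ is entirely encoded by the pair $(f, \, v_0)$ with $f := (1 \triangleright -) \in {\rm End}_k(V)$ and $v_0 := \vartheta(1, \, 1) \in V$, since $\triangleright$ and the symmetric form $\vartheta$ on the one-dimensional space $k$ are determined by these values. Reading the two axioms of \equref{metdat} on the generator $1 \in k$: the first axiom with $a = b = 1$ gives $2 f^2 = 0$, hence $f^2 = 0$ since ${\rm char}(k) \neq 2$; the second axiom with $a = b = c = 1$ gives $3 f(v_0) = 0$, hence $v_0 \in {\rm Ker}(f)$ since ${\rm char}(k) \neq 3$. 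Substituting back into the crossed-product multiplication \equref{metprod} shows that $k_0 \#_{(\triangleright, \, \vartheta)} V$ is precisely the algebra $V_{(f, \, v_0)}$ of \equref{metcodim1}. This identification is the backbone of both parts.

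For part (1), the forward implication runs as follows. If $A'$ is abelian of codimension one, then $A/A'$ is abelian (the quotient by the derived subalgebra always is) of dimension one, so $A$ is metabelian and the canonical projection $\pi : A \to A/A' \cong k_0$ has kernel $V := A'$. Applying \prref{hocechiv} (equivalently \coref{cazulmeatabclas}(1)) yields an isomorphism $A \cong k_0 \#_{(\triangleright, \, \vartheta)} V$, which by the first paragraph is some $V_{(f, \, v_0)}$ with $V = A'$. For the converse, $V_{(f, \, v_0)}$ is a JJ algebra because it is a crossed product (\prref{hocprod}), and a direct reading of \equref{metcodim1} shows that its derived algebra is $\{0\} \times \bigl({\rm Im}(f) + k\, v_0\bigr)$, which is visibly abelian. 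I would then record that this derived algebra has codimension one exactly when ${\rm Im}(f) + k\, v_0 = V$; this equality is automatic in the realization produced by the forward implication, where $V = A'$ is by definition generated by the products, and the excluded case $(f, \, v_0) = (0, \, 0)$ is precisely the degenerate situation in which the derived algebra vanishes.

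For part (2), the starting point is \coref{cazulmeatabclas}(2) applied with $A_0 = k_0$, which writes ${\mathbb H}^{2}(k_0, \, V_0)$ as the disjoint union of the ${\rm H}^2_{\triangleright}(k_0, \, V_0)$ over all $k_0$-module structures $\triangleright$ on $V$. Each such $\triangleright$ is, by the first paragraph, the same datum as an endomorphism $f$ with $f^2 = 0$, so the index set of the coproduct is exactly $\{f \in {\rm End}_k(V) \mid f^2 = 0\}$; note that ${\rm Im}(f) \subseteq {\rm Ker}(f)$ here, so the target quotient is well defined. It then remains to identify each fibre ${\rm H}^2_f(k_0, \, V_0)$ with ${\rm Ker}(f)/{\rm Im}(f)$. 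The map $\vartheta \mapsto \vartheta(1, \, 1) = v_0$ identifies the symmetric $f$-cocycles on $k$ with ${\rm Ker}(f)$ (the cocycle condition collapsing to $3 f(v_0) = 0$), and the coboundary relation for a linear $r : k \to V$ with $r(1) = w$ reads $v_0 - v_0' = 2 f(w)$; since $2$ is invertible, the coboundaries correspond exactly to ${\rm Im}(f)$. Passing to the quotient gives ${\rm H}^2_f(k_0, \, V_0) \cong {\rm Ker}(f)/{\rm Im}(f)$ via $\overline{\vartheta} \mapsto \overline{v_0}$, and under the identification of the first paragraph the class $\overline{v_0}$ corresponds to $V_{(f, \, v_0)}$, as claimed.

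The bulk of the argument is bookkeeping that poses no real difficulty once the reduction $(\triangleright, \, \vartheta) \leftrightarrow (f, \, v_0)$ is in place. The one genuinely delicate point is the codimension count in part (1): one must track that the derived algebra of $V_{(f, \, v_0)}$ is the possibly proper subspace $\{0\} \times ({\rm Im}(f) + k\, v_0)$, so the converse implication is to be read together with the normalization ${\rm Im}(f) + k\, v_0 = V$ supplied by the structure theorem rather than for an arbitrary ambient $V$. Handling this normalization, along with the harmless degenerate case $(f, \, v_0) = (0, \, 0)$, is where I would concentrate the care.
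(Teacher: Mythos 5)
Your proposal is correct and follows essentially the same route as the paper: reduce to crossed products $k_0 \# V_0$ via \prref{hocechiv}, encode the crossed system by the pair $(f, v_0)$ with $f^2=0$ and $f(v_0)=0$ read off from the axioms using ${\rm char}(k)\neq 2,3$, and identify the coboundary relation with $v_0-v_0'\in {\rm Im}(f)$. The one place you go beyond the paper is the converse of part (1): the paper dismisses it as obvious, whereas you correctly observe that the derived algebra of $V_{(f,v_0)}$ is $\{0\}\times({\rm Im}(f)+k\,v_0)$ and so has codimension one only under the normalization ${\rm Im}(f)+k\,v_0=V$ (automatic when $V=A'$ in the forward direction) -- a genuine point of care that the paper's proof glosses over.
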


\begin{proof}
First of all we note that a JJ algebra has an abelian derived
algebra of codimension $1$ if and only if it is isomorphic to a
crossed product of the form $k_0 \# V_0$, for some vector space
$V$. If $A$ is such an algebra, we can take $V := A'$, the derived
algebra of $A$. Hence, $A/A' \cong k$, since $A'$ has codimension
$1$ in $A$ and thus it is an abelian algebra as ${\rm char} (k)
\neq 3$. Using \prref{hocechiv} we obtain that $A \cong k_0 \#
V_0$. The converse is obvious. Thus, we have to describe the set
of all pairs $(\triangleright, \, \vartheta)$ consisting of
bilinear maps $\triangleright : k \times V \to V$ and $\vartheta :
k \times k \to V$ such that $(\triangleright, \, \vartheta, \,
\cdot_V := 0)$ is a crossed system of $k_0$ by $V$, that is axioms
(J1)-(J4) hold for $\cdot_V := 0$. Of course, bilinear maps
$\triangleright : k \times V \to V$ are in bijection with linear
maps $f \in {\rm End}_k (V)$ via the two-sided formula $1
\triangleright v :=: f(v)$ and bilinear maps $\vartheta : k \times
k \to V$ are in bijection with the set of all elements $v_0$ of
$V$ via the two-sided formula $\vartheta (a, \, b) :=: ab\, v_0$,
for all $a$, $b\in k$. Now, axioms (J1) and (J3) are trivially
fulfilled for $(\triangleright = \triangleright_f, \, \vartheta =
\vartheta_{v_0})$ and since ${\rm char} (k) \neq 2 , 3$ we can
easily see that axioms (J2) and respectively (J4) hold if and only
if $f^2 = 0$ and $f(v_0) = 0$. This proves the first statement:
the condition  $(f, \, v_0) \neq (0, \, 0)$ ensures that the
algebra $V_{(f, \, v_0)}$ is not abelian. The bijection given in
(2) is obtained by applying \coref{cazuabspargere} for the special
case $A := k_0$. We fix $f \in {\rm End}_k (V)$ with $f^2 = 0$ -
which is the same as fixing $\triangleright$ in
\coref{cazuabspargere} via the above correspondence. Then, the
equivalence relation given by \equref{cazslab} applied to the
elements in ${\rm Ker}(f)$ becomes: $v_0 \approx_0 v_0'$ if and
only if there exists an element $\xi \in V$ such that $v_0 - v_0'
= 2 f(\xi)$, that is $v_0 - v_0' \in {\rm Im} (f)$ as the
characteristic of $k$ is $\neq 2$. This proves the second
statement and the proof is now completely finished.
\end{proof}

\begin{example} \exlabel{finalex}
By taking $V: = k^n$ in \prref{codim1der} we obtain the explicit
description of all $(n+1)$-dimensional metabelian JJ algebras
having the derived algebra of dimension $n$. Indeed, we denote by
$ {\mathcal N} (n) := \{ \, X \in {\rm M}_{n}(k) \, \, | \,\, X^2
= 0 \}$ and ${\rm Ker} (X) := \{ \, v_0 \in k^n \, \, | \,\, X v_0
= 0 \}$, for all $X \in {\mathcal N} (n)$. Then:
$$
{\mathbb H}^{2} \, \bigl(k_0, \, k^n_0 \bigl) \, \cong \,
\sqcup_{X \in {\mathcal N} (n)} \,\, {\rm Ker} (f)/\equiv
$$
where $v_0\equiv v_0'$ if and only if there exists $r\in k^n$ such
that $v_0' - v_0 = Xr$. For $X = (x_{ij}) \in {\mathcal N} (n)$
and $\overline{v_0} =\overline {(v_1, \cdots, v_n)}) \in {\rm Ker}
(X)/\equiv$, we denote by $k^{n}_{(X, \, \overline{v_0})}$ the
associated crossed product $k \# k^n$. Then $k^{n}_{(X, \,
\overline{v_0})}$ is the JJ algebra having $\{f, \, e_1, \cdots,
e_n\}$ as a basis and multiplication defined for any $i$, $j = 1,
\cdots, n$ by:
$$
f \cdot f := \sum_{j=1}^n \, v_j \, e_j, \quad f\cdot e_i :=
\sum_{j=1}^n \, x_{ji} \, e_j
$$
Any $(n+1)$-dimensional metabelian JJ algebra having the derived
algebra of codimension $1$ is isomorphic to such an algebra
$k^{n}_{(X, \, \overline{v_0})}$, for some $X \in {\mathcal N}
(n)$ and $\overline{v_0}) \in {\rm Ker} (X)/\equiv$.
\end{example}

\end{document}